\documentclass[11pt]{article}
\usepackage[pagebackref,colorlinks=true,urlcolor=red,linkcolor=blue,citecolor=red]{hyperref}

\usepackage{amsthm,amssymb,amsfonts, amsmath,amsrefs,amscd}
\usepackage{color}
\usepackage[]{fancyhdr}
\usepackage[]{graphicx,wrapfig,lipsum}
\usepackage{soul}

\usepackage{enumitem}
\setlist[enumerate,1]{label=(\alph*), ref=(\alph*)}

\graphicspath{{/EPSF/}{../figures/}{figures/}}
\usepackage{mathtools}  
\mathtoolsset{showonlyrefs}
\usepackage{enumitem}
\usepackage{graphicx} 
\newcommand{\abs}[1]{\lvert #1 \rvert}
\newcommand{\srn}{\mathcal{S}'(\Rn)}
\usepackage[dvipsnames]{xcolor}

\newcommand{\sn}{\mathbb{S}^{n-1}}

\newcommand{\I}{\mathrm{i}}
\newcommand{\D}{\mathrm{d}}

\newcommand{\wh}{\widehat}
\newcommand{\vf}{\varphi}

\newcommand{\PD}{\partial}

\newcommand{\Fc}{\mathcal{F}}

\newcommand{\Sc}{\mathcal{S}}

\newcommand{\Rb}{\mathbb{R}}
\newcommand{\Sb}{\mathbb{S}}

\usepackage{amsthm}
\usepackage{amsfonts}
\newcommand{\p}{\partial}

\newcommand{\Beq}{\begin{equation}}
\newcommand{\Eeq}{\end{equation}}
\newcommand{\beq}{\begin{equation*}}
\newcommand{\eeq}{\end{equation*}}
\newcommand{\bal}{\begin{align}}
\newcommand{\eal}{\end{align}}

\newcommand*\Rn{\mathbb{R}^n}

\newcommand{\inner}[1]{\langle #1 \rangle}

\newcommand*{\Rt}{\mathbb{R}^2}

\newcommand{\sch}{\mathcal{S}}

\renewcommand{\d}{\mathrm{d}}

\renewcommand{\l}{\langle}
\renewcommand{\r}{\rangle}

\newtheorem{theorem}{Theorem}[]
\newtheorem{lemma}[theorem]{Lemma}
\newtheorem{proposition}[theorem]{Proposition}
\newcommand{\qum}[1]{\quad\mbox{#1}}
\newtheorem{corollary}[theorem]{Corollary}
\newtheorem{definition}[theorem]{Definition}
\newtheorem{remark}[theorem]{Remark}

\newcommand{\lr}[1]{\left(#1\right)}

\author{Antti Kykk\"anen\thanks{Department of Computational Applied Mathematics and Operations Research, Rice University, Houston, TX, USA. \url{ak272@rice.edu}}\and Rohit Kumar Mishra\thanks{Department of Mathematics, Indian Institute of Technology, Gandhinagar, Gujarat, India. \url{rohit.m@iitgn.ac.in}, \url{rohittifr2011@gmail.com}}
\and Suman Kumar Sahoo\thanks{Department of Mathematics, Indian Institute of Technology, Bombay, Maharashtra, India. \url{suman@math.iitb.ac.in }}}

\title{Generalized Helmholtz-type decompositions of symmetric tensor fields and applications to ray transforms}

\begin{document}
\maketitle
\begin{abstract}
%In this work, we study a solenoidal-potential type decomposition of a symmetric $m$-tensor field in $\Rb^2$, and its implications in addressing injectivity questions for a set of integral transforms in the field of integral geometry.
We study a solenoidal-potential type decomposition of a symmetric $m$-tensor field in $\Rb^2$, and its implications to injectivity questions for the momentum and elastic ray transforms. For symmetric tensor fields, a general decomposition with a restriction on the dimension and order of the decomposition was proved in~\cite{Rohit_Suman}. We extend the result to dimension $2$ under a mean-zero assumption. We use the decomposition in $2$ dimensions to prove the injectivity of the momentum and elastic ray transforms. We also prove a connection between the two integral transforms for $2$-tensors.
%A general decomposition result was proved in the article \textit{``Injectivity and range description of integral moment transforms over $m$-tensor fields in $\Rb^n$, SIAM J. Math. Anal. 53 (2021), no. 1, 253–278. MR4198570"}, with a restriction on the dimension and order of the decomposition. With a mean-zero assumption on the tensor field, we prove the decomposition for a symmetric $m$-tensor field in $\Rb^2$, which was not possible in the previous work. 
Later, we use our decomposition to prove the injectivity of integral transforms, including longitudinal and elastic ray transforms, without any mean-zero assumption on tensor fields. Next, we explore connections between different integral transforms and use these to relate their corresponding properties. 
\end{abstract}
\vspace{2mm} 
\textbf{Keywords:} Helmholtz Decomposition, Elastic Ray transform, Momentum Ray Transforms, Symmetric $m$-Tensor Fields, Integral Geometry
\vspace{2mm} \\
\textbf{Mathematics Subject Classification.} 44A12, 44A05, 45Q05, 42A38

\section{Introduction}

The well-known Helmholtz Decomposition states that a ``sufficiently smooth, rapidly decaying" vector field in $\mathbb{R}^3$ can be uniquely decomposed into a curl-free and a divergence-free part. The curl-free and divergence-free parts are often referred to as the potential part and solenoidal part, respectively. Generalizing this decomposition, Sharafutdinov~\cite{Sharafutdinov_book} proved that a symmetric $m$-tensor field in $\Rb^n$ can be uniquely decomposed into a solenoidal part and a potential part of the given tensor field. Recently in~\cite{Rohit_Suman}, the authors introduced $k$-solenoidal and $k$-potential tensor fields as a generalization of solenoidal and potential tensor fields in $\Rb^n$.
% A more general decomposition of symmetric $m$-tensor fields into $k$-solenoidal and $k$-potential tensor fields appeared in~\cite{Rohit_Suman}.
Their decomposition is proven under a restriction on the dimension and the order of tensor fields, which makes the decomposition not applicable for $n=2$. Improving on this, we prove that the decomposition result holds in $2$ dimensions as well, under a mean-zero condition on the tensor field. Furthermore, we prove that the imposed condition cannot be removed, as we also prove that existence of such a decomposition implies the mean-zero condition. There are other kinds of decompositions studied in the literature, specifically for problems in integral geometry; we refer the reader to~\cite{derevtsov3, Polyakova_Svetov_normal_Radon_numerical, ilmavirta2025elasticraytransform} and the references therein. 

Such decompositions play a fundamental role in many applications. In fluid dynamics, the Helmholtz decomposition allows one to split a velocity field into its divergence-free (solenoidal) part and its irrotational (gradient) part. This separation is essential in the analysis of incompressible flows, where the pressure term acts as a Lagrange multiplier enforcing the divergence-free constraint. In particular, by projecting the Navier–Stokes equations onto the space of divergence-free vector fields via the Helmholtz decomposition, one eliminates the pressure term and obtains the Stokes operator as the linear part of the dynamics. This viewpoint is fundamental in the linearization of the Navier–Stokes equations around equilibrium states and in the study of well-posedness and stability, see for instance~\cite{Chorin,Galdi}. The Helmholtz decomposition also plays an important role in electrodynamics and medical imaging. More generally, such decompositions provide powerful tools for characterizing the kernels of integral transforms and for establishing injectivity modulo the identified kernel. In our work, we apply this framework to momentum ray transforms and to elastic ray transforms of tensor fields.
%Such decompositions play a fundamental role in many applications. In particular, the Helmholtz decomposition of vector fields is central to fluid dynamics, where it separates a velocity field into its divergence-free (incompressible/solenoidal) component and its irrotational (gradient) component. The Helmholtz decomposition is applied to linearized the incompressible Navier Stokes equation and as a result one obtains Stokes equation~\cite{??}\antti{Citation missing}. The decomposition is also fundamental in electrodynamics and medical imaging. More generally, such decompositions provide powerful tools for characterizing the kernels of integral transforms and for establishing injectivity modulo the identified kernel. In our work, we apply this framework to momentum ray transforms and to elastic ray transforms of tensor fields.

% Such characterization result of vector fields and tensor fields are very useful in many applications. The Helmholtz decomposition  of vector fields has application in fluid dynamics and elElectrodynamics, and medical imaging. 
% Such decompositions are very useful for characterizing the kernels of integral transforms and for proving injectivity modulo the identified kernel. We have applied our decomposition to the momentum ray transforms and to the elastic ray transforms of tensor fields.

For momentum ray transforms, we show that the first two integral moments of a symmetric $2$-tensor field are zero if and only if the tensor field is the Hessian of a scalar, real-valued function. To prove injectivity on solenoidal tensors, we do not require the mean-zero condition (which was essential for the decomposition), since it is automatically satisfied if the momentum transforms are zero. Momentum ray transform appears to study Calder\'on type inverse problems for biharmonic and polyharmonic operators, see for instance~\cite{BKS_Siam,SS_jde,BKSU_nonlinear_cpde}. 
Such injectivity results are studied by many authors in different settings; the author may refer to~\cite{Sharafutdinov_1986_momentum,Rohit_Suman,Kamran_attenuated_mrt,Plamen_Venky,KMSS,JMKS_normal,AKS_ucp,Monard_Efficient_TT,Derevtsov_2023,derevtsov3,Gunther_2013,Hoop_2019,Gunther_Book,Gunther_Mikko} and some references therein.

Elastic ray transforms arise as linearizations elastic travel times around a homogeneous and isotropic background (see~\cite{ilmavirta2025elasticraytransform} for details). A solenoidal-potential decomposition adapted to the elastic ray transform setting appeared in~\cite{ilmavirta2025elasticraytransform} under the dimensional restriction $n \geq 3$.
We extend their result to $2$-dimensional domains. For elastic tensor fields the potential part of the tensor field consists of two not mutually disjoint parts (see Section~\ref{sec:Def and notation} for definitions of the operators, c.f.~\cite{ilmavirta2025elasticraytransform}). Additionally, we use our decomposition to extend the injectivity result from~\cite{ilmavirta2025elasticraytransform} into the $2$-dimensional setting.

Finally, we conclude the article by relating the elastic ray transform, the momentum ray transforms, and the mixed ray transforms and use the observed relations to characterize their kernels in the case of $2$-tensors.

\subsection{Organization of the article}

In Section~\ref{sec:Def and notation}, we introduce the various notations and definitions that will be used in the article. Section~\ref{sec:Decomposition Results} focuses on the $k$-solenoidal and $k$-potential decomposition result. Section~\ref{sec:elastic-decomp} focus on solenoidal-potential decomposition of elastic tensor fields. In Sections~\ref{sec:Application to moments} and~\ref{sec:Application to elastic ray}, we apply our decompositions to the momentum ray transform and the elastic ray transforms of symmetric tensor fields in $\Rb^2$, respectively. The relation between different integral transforms is discussed in Section~\ref{sec:relation between integral transform}. In appendix~\ref{appen_homo_dis}, we recall details on homogenous distributions which are used throughout the article. Finally, we complete the paper with an acknowledgment in Section~\ref{sec:acknowledgement}.

\section{Definitions and notation}\label{sec:Def and notation}
In this section, we introduce some necessary definitions and notations used throughout this article. Most of these definitions and notations can be found in the book~\cite{Sharafutdinov_book} by Sharafutdinov. We also refer to the monograph~\cite{Gunther_Book} for a detailed study of integral transforms from a geometric viewpoint.

Let $T^m(\Rb^n)$ denotes the space of $m$-tensors on $\Rb^n$ and $S^m = S^m(\Rb^n)$ be its subset containing symmetric $m$-tensors.  
% The natural projection of $T^m(\Rb^n)$ onto the space of symmetric tensors $S^m(\Rb^n)$, $\sigma : T^m(\Rb^n) \rightarrow S^m(\Rb^n)$, is given by  
% \begin{align}\label{eq:definition of sigma}
% (\sigma v)_{i_1\dots i_m} = \frac{1}{m!}\sum_{\pi \in \Pi_m} v_{\pi(i_1)\dots \pi(i_m)} 
% \end{align}
% where $\Pi_m$ is the set of permutations of order $m$.
% \vspace{2mm}
We use the notations $\otimes$ to denote the tensor product of tensors/vectors. More explicitly, for $v\in T^m(\mathbb{R}^n)$ and $u\in T^{m'}(\mathbb{R}^n)$, the tensor product of $u$ and $v$ gives an element in $T^{m+m^\prime}(\mathbb{R}^n)$, which is given by
% We use the notations $\otimes$ and $\odot$ to denote the tensor product and symmetric tensor product of tensors/vector spaces, respectively. Explicitly, for $v\in S^m(\mathbb{R}^n)$ and $u\in S^{m'}(\mathbb{R}^n)$, these tensor products are defined as follows: 
\begin{align*}
    (v \otimes u)_{i_1\dots i_{m+m'}} &:=  v_{i_1\dots i_m}u_{i_{m+1}\dots i_{m+m'}}.
   % v\odot u &= \frac{1}{2}\left(v \otimes u +u \otimes v\right).
%    (v\odot u)_{i_1\dots i_{m+m'}} &:= \tblue{\frac{m!m'!}{(m+m')!}\sum_{\sigma \in \Pi_{m}, \tilde{\sigma} \in \Pi_{m'}}v_{\sigma(1)\sigma(2)\dots\sigma(m)}u_{\tilde{\sigma}(1)\tilde{\sigma}(2)\dots\tilde{\sigma}(m')}},
    %\tblue{\frac{1}{(k+\ell)!}\sum_{\sigma \in \Pi_{k+\ell}}v_{\sigma(1)\sigma(2)\dots\sigma(m)}u_{\sigma(m+1)\sigma(m+2)\dots\sigma(m+m')}},
    %\left(v \otimes u +u \otimes v\right).
\end{align*}
% \tblue{where $\Pi_m$ and $\Pi_{m'}$ are the set of permutation of order $m$ and $m'$, respectively.}
% \vspace{2mm}\\
Let $E^m(n)$ denotes the space obtained by taking symmetric tensor product of $\mathit{S}^m(\mathbb{R}^n)$ with itself. This means $E^m(n)$ is a collection of $2m$-tensors that are symmetric in the first $m$ and the last $m$ indices, and it is also unchanged if we swap the first $m$ and the last $m$ components.
% For non-integers $k$ and $\ell$, let $\mathit{S}^{k,\ell}(\mathbb{R}^n):= \mathit{S}^{k}(\mathbb{R}^n)\odot \mathit{S}^{\ell}(\mathbb{R}^n)$ 
% be the space of $(k+\ell)$ tensors that are symmetric with respect to the first $k$ and last $\ell$ indices in $\FR^n$. In the particular case when $k = \ell$, we have block symmetry as well, that is, the tensors in this space are also unchanged if we swap the first $k$ and last $k$ indices. We have used a different notation for this special case.
% \vspace{2mm}\\
% Also, we will use the following conventions while working with these spaces:
% \begin{align*}
%     \mathit{S}^{m,0}(\mathbb{R}^n) &=  \mathit{S}^{0,m}(\mathbb{R}^n) = \mathit{S}^{m}(\mathbb{R}^n)\\
%      \mathit{S}^{m,m}(\mathbb{R}^n) &= \mathit{E}^{m}(n), \mbox{ this notation is used in \cite{ilmavirta2025elasticraytransform}}.
% \end{align*}
The notation $E^m(n)$ is used in~\cite{ilmavirta2025elasticraytransform}, and the tensors in the space $E^m(n)$ are often referred to as \textit{elastic tensors.} We will also use the same notation $E^m(n)$ when we work with elastic ray transform. Note that an elastic $m$ tensor is actually a $2m$ tensor having specific symmetries. Please note that $\mathit{E}^{1}(n) = S^2(\mathbb{R}^n)$. The following inclusion may be observed directly:
$$ S^{2m}(\mathbb{R}^n) \subsetneq E^{m}(n)  \subsetneq T^{2m}(\mathbb{R}^n).$$
The natural projection of $T^{2m}(\Rb^n)$ onto the space $S^m(\Rb^n)\otimes S^m(\Rb^n)$ is denoted by $\sigma^{m,m} : T^{2m}(\Rb^n) \rightarrow S^m(\Rb^n)\otimes  S^m(\Rb^n)$ and is given by  
\begin{align}\label{eq:definition of sigma(k,l)}
(\sigma^{m,m} v)_{i_1\dots i_mj_1\dots j_m} = \frac{1}{(m!)^2}\sum_{\pi_1, \pi_2 \in \Pi_m}  v_{\pi_1(i_1)\dots \pi_1(i_m)\pi_2(j_1)\dots \pi_2(j_m)},
\end{align}
where $\Pi_m$ is the set of permutations of order $m$. The projection $\varepsilon : T^{2m}(\Rb^n) \rightarrow E^m(n)$ is defined as follows:
\begin{align}\label{eq:definition of epsilon}
    (\varepsilon v)_{i_1\dots i_mj_1\dots j_m} = \frac{1}{2}\left((\sigma^{m,m} v)_{i_1\dots i_mj_1\dots j_m} +(\sigma^{m,m} v)_{j_1\dots j_m i_1\dots i_m}\right).
\end{align}
Further, let  $C^\infty(\mathbb{R}^n;\mathit{S}^{m})$ denote the space of smooth symmetric $m$-tensor fields and $C_c^\infty(\Rb^n;\mathit{S}^{m})$ be the space of compactly supported tensor fields in $C^\infty (\Rb^n;\mathit{S}^{m})$. In a similar spirit, tensor fields defined on other function spaces will be used throughout the article. For instance, $\Sc(\Rb^n;\mathit{S}^{m})$ denotes the space of symmetric $m$-tensor fields with components in the Schwartz space $\mathcal{S}(\Rn)$. The same convention will be used for tensor fields in $E^m(n)$. The tensor fields in the $\mathcal{S}(\Rb^n; E^m(n))$ are usually known as \textit{elastic tensor fields}. In standard Cartesian coordinates, any element $f \in \mathcal{S}(\Rb^n;\mathit{S}^{m})$ can be written as
\[ f(x) = \sum_{i_1, i_2, \dots, i_m = 1}^nf_{i_1\dots i_m}(x) dx^{i_1} \dots dx^{i_m} \]
%$$ f(x) = f_{i_1\dots i_m}(x) dx^{i_1} \dots dx^{i_m}$$
where  $f_{i_1 \dots i_m} \in \Sc(\Rb^n)$ are symmetric in all its indices. 
%For repeated indices, the Einstein summation convention will be assumed throughout this article. 
We will not distinguish between covariant and contravariant tensors as we are working with the Euclidean metric.  \vspace{2mm}\\
For $x \in \Rb^n$,  we define the \textit{symmetric multiplication operator} $i_x :S^m(\Rb^n)  \rightarrow S^{m+1}(\Rb^n)$ by 
$$(i_x f)_{i_1i_2\dots i_{m+1}} =\sigma(i_1, \dots, i_m, i_{m+1})(x_{i_{m+1}}f_{i_{1}i_{2}\dots i_{m}}).$$
In the same spirit, we also define the dual of $i_x$, \textit{the convolution operator}, $j_x :S^m(\Rb^n)  \rightarrow S^{m-1}(\Rb^n)$ by 
$$(j_x f)_{i_1i_2\dots i_{m-1}} = \sum_{i_m=1}^n f_{i_1i_2\dots i_m}x_{i_m}.$$ 
The composition of these operators will be essential in the next section to prove the decomposition theorem and hence for the convenience of the reader, we introduce the operators $i^k_{x}:  S^m(\Rb^n) \rightarrow S^{m+k}(\Rb^n)$ and $j^k_{x}:  S^{m+k}(\Rb^n) \rightarrow S^{m}(\Rb^n)$, for any fixed integer $k \geq 1$, as follows (cf.~\cite[Page 4]{Rohit_Suman}):
\begin{align*}
\left(i^k_{x}f\right)_{i_1i_2\dots i_{m+k}} &= \underbrace{i_x \circ i_x\circ \cdots\circ i_{x}}_{k \mbox{ times}} f=\sigma(i_1, \dots, i_m, i_{m+1}, \dots, i_{m+k})(x_{i_{m+1}}\dots x_{i_{m+k}}f_{i_{1}i_{2}\dots i_{m}})\\
\left(j^k_{x}f\right)_{i_1i_2\dots i_{m}}&=\underbrace{j_x \circ j_x\circ \cdots\circ j_{x}}_{k \mbox{ times}} f= \sum_{i_{m+1}, \dots,i_{m+k} = 1 }^nx_{i_{m+1}}\dots x_{i_{m+k}}f_{i_{1}i_{2}\dots i_{m}i_{m+1}\dots i_{m+k} }.
\end{align*}
Let $\tilde{\D}: C^\infty(\Rb^n;T^{m})\rightarrow C^\infty(\Rb^n;T^{m+1})$ be a differential operator defined as
$$(\tilde{\D} u)_{i_1\dots i_mi_{m+1}} = \frac{\partial u_{i_1\dots i_m} }{\partial x_{i_{m+1}}}.$$
The operator of \textit{inner differentiation} or \textit{symmetrized derivative} $\D:C^\infty(\Rb^n;S^m)\rightarrow C^\infty(\Rb^n;S^{m+1})$ is defined by
$$(\D u)_{i_1\dots i_mi_{m+1}} = \sigma(i_1, \dots , i_{m+1}) \left(\frac{\partial u_{i_1\dots i_m} }{\partial x_{i_{m+1}}}\right),$$
where $\sigma$ is defined in equation~\eqref{eq:definition of sigma(k,l)}. The \textit{divergence} operator $\delta:C^\infty(\Rb^n;S^{m})\rightarrow C^\infty(\Rb^n;S^{m-1})$ is defined by the formula $$ (\delta u)_{i_1\dots i_{m-1}} = \sum_{j=1}^n \frac{\partial u_{i_1\dots i_{m-1}j} }{\partial x_{j}}.$$
The following second-order differential operator will also be used in the later sections of the manuscript.\\
For the purposes of the elastic ray transform appearing later, we also define the operator $H: C^\infty(\Rb^n;E^m(n))\rightarrow C^\infty(\Rb^n;E^{m+1}(n))$ is defined as
\begin{align}
    (H v)_{i_1\dots i_mi_{m+1}j_1\dots j_mj_{m+1}} =  \varepsilon \left(\frac{\partial^2}{\partial x_{i_{m+1}}\partial x_{j_{m+1}}}v_{i_1\dots i_mj_1\dots j_m}\right). 
\end{align}
The formal $L^2$-adjoint $H^*: C^\infty(\Rb^n;E^{m+1}(n))\rightarrow C^\infty(\Rb^n;E^{m}(n))$ of $H$ is given by
\begin{align}
    (H^* v)_{i_1\dots i_mj_1\dots j_m} = \frac{\partial^2}{\partial x_{i_{m+1}}\partial x_{j_{m+1}}}v_{i_1\dots i_mi_{m+1}j_1\dots j_mj_{m+1}}.
\end{align}
In addition we shall make use of the following operator, specific to elastic $2$-tensor fields. We define the operator $K \colon \mathcal{S}(\mathbb{R}^n;\Rb^n) \to \mathcal{S}(\mathbb{R}^n;E^2(n))$ by $Ku =\varepsilon(\D u \otimes I)$ where $I$ is the identity matrix. In coordinates, we have
\begin{equation}
(Ku)_{ijkl} =
\frac14
\left(
\frac{\partial u_j}{\partial x_i}
+
\frac{\partial u_i}{\partial x_j}
\right)
\delta_{kl}
+
\frac14
\left(
\frac{\partial u_k}{\partial x_l}
+
\frac{\partial u_l}{\partial x_k}
\right)
\delta_{ij}.
\end{equation}
The formal $L^2$-adjoint of $K$ is the operator $K^* \colon \mathcal{S}(\mathbb{R}^n;E^2(n)) \to \mathcal{S}(\mathbb{R}^n;\Rb^n)$ defined by
\begin{equation}
(K^*w)_i
=
-
\sum_{j,k}
\frac{\partial w_{ijkk}}{\partial x_j}.
\end{equation}
% Next, we define two special tensor fields, generalizations of the solenoidal and potential tensor fields, respectively (see \cite[Definition 3.1]{Rohit_Suman}). \antti{Add here defition for solenoidal elastic tensor fields? Uniformize. I'll do this later, when I restructure the article a bit.}
% \begin{definition}[$k$-solenoidal and $k$-potential tensor fields]\label{def:k-potential and k-solenoidal}
% For any fixed $1 \leq k \leq m$, a symmetric $m$-tensor field $f \in C^\infty(\Rb^n;S^m)$ is said to be 
% \begin{enumerate}
%     \item $k$-solenoidal tensor field if   $\delta^k f = 0.$
% %     \begin{align*}
% %     \delta^k f = 0.
% % \end{align*}
% \item $k$-potential tensor field if there exists a $(m-k)$-tensor field $v \in C^\infty(\Rb^n, S^{m-k})$ such that   $ f = \D^k v.$  %\begin{align*}
% %   f = \D^k v.
% % \end{align*}
% \end{enumerate}
% \end{definition}
% For $k=1$, the $k$-solenoidal and $k$-potential tensor fields coincide with the usual solenoidal and potential tensor fields, respectively. 
The Fourier transform  $\Fc :\Sc(T\Sb^1) \longrightarrow \Sc(T\Sb^1)$ is defined as follows, see~\cite[Section 2.1]{Sharafutdinov_book}:
\begin{align}\label{eq:Fourier transform on sphere bundle}
    \Fc (\vf) (y, \xi) =    \widehat{\vf}(y, \xi) = \frac{1}{(2 \pi)^{1/2}}\int_{\xi^\perp} e^{-i x\cdot y} \vf(x, \xi)\, dx,
\end{align}
where $dx$ is the arc-length measure on the line $\xi^\perp = \{ x\in \Rb^2 : \l x,\xi \r = 0\}.$

\section{Decomposition results}
In this section, we provide two decomposition theorems in $\Rt$. We begin by proving the decomposition of symmetric rank-two tensor fields, followed by the analogous result for elastic rank-two tensors. 
\subsection{A decomposition result for symmetric $2$-tensor fields in \texorpdfstring{$\Rb^2$}{}}\label{sec:Decomposition Results}
The goal of this section is to prove that any symmetric $2$-tensor field can be decomposed uniquely into a $2$-solenoidal part and a $2$-potential part. This decomposition theorem extends the result~\cite[Theorem 2.6.2]{Sharafutdinov_book}, which gives a unique decomposition of a symmetric $m$-tensor field into its solenoidal part and potential part. This result can be viewed as a generalization of the well-known Helmholtz (also known as the Helmholtz-Hodge) decomposition of a vector field into a divergence-free (solenoidal) part and a curl-free (potential) part.

\begin{definition}[$k$-solenoidal and $k$-potential tensor fields]\label{def:k-potential and k-solenoidal}
For any fixed $1 \leq k \leq m$, a symmetric $m$-tensor field $f \in C^\infty(\Rb^n;S^m)$ is said to be 
\begin{enumerate}
    \item $k$-solenoidal tensor field if   $\delta^k f = 0$, and
%     \begin{align*}
%     \delta^k f = 0.
% \end{align*}
\item $k$-potential tensor field if there exists a $(m-k)$-tensor field $v \in C^\infty(\Rb^n, S^{m-k})$ such that   $ f = \D^k v.$  %\begin{align*}
%   f = \D^k v.
% \end{align*}
\end{enumerate}
\end{definition}
\noindent For $k=1$, the $k$-solenoidal and $k$-potential tensor fields coincide with the usual solenoidal and potential tensor fields, respectively. The definition already appeared in~\cite[Definition 3.1]{Rohit_Suman}.

We begin by recalling a tensor decomposition previously shown in~\cite[Lemma 3.2]{Rohit_Suman}. We only recall a simplified version (corresponding to $m=2$ and $n=2$) of this result, which we will need in the following discussion. 

\begin{lemma}[{\cite[Lemma 3.2]{Rohit_Suman} and \cite[Lemma 2.6.1]{Sharafutdinov_book}}]
\label{th: decomposition of f in frequency variable} 
Given a tensor field $f\in C^{\infty}(\Rb^2;S^2)$, there exist unique $g\in C^{\infty}(\Rb^2\setminus\{0\};S^2)$ and  $v\in C^{\infty}(\Rb^2\setminus\{0\})$ such that the following decomposition of $f$ holds:
	\begin{align}\label{eq: decomposition of f in frequency variable}
	f(x) = g(x) + i^2_{x} v (x),  \qquad x \in \Rn\setminus\{0\}
	\end{align}
and $g$ satisfies $j^2_{x} g(x) =0$, $x \in \Rb^2\setminus\{0\}$. For $x \in \Rb^2\setminus\{0\}$, the fields $g$ and $v$ are explicitly expressed in terms of $f$ as follows:
\begin{align}\label{eq: expression of v and g}
v(x) 
= \sum_{i,j=1}^2\frac{x_{i} x_{j}}{|x|^{4}}f_{i j}(x) \qquad \mbox{ and } \qquad 	g_{ij}(x) 
= \sigma(i,j)\sum_{k,\ell=1}^2\left( \delta_{ik}\delta_{j\ell} - \frac{x_{i} x_{j}x_{k} x_{\ell}}{|x|^{4}}\right)f_{k \ell}(x).
\end{align}	
\end{lemma}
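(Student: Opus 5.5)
Fix $x\neq 0$ and work pointwise: the statement is linear algebra in the fibre $S^2(\Rb^2)$ with $x$ as a parameter. The smoothness of $g$ and $v$ on $\Rb^2\setminus\{0\}$ will then follow from the explicit formulas, which are rational in $x$ with denominator $|x|^4$.

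\emph{Uniqueness.} Suppose $f=g+i_x^2 v$ with $j_x^2 g=0$. Apply $j_x^2$ to both sides. In the symmetric case $m=0$, $k=2$ we have $i_x^2 v = v\,x\otimes x$, so
\[
j_x^2 i_x^2 v = v\sum_{i,j}x_ix_jx_ix_j = |x|^4 v .
\]
Hence
\[
j_x^2 f = |x|^4 v, \qquad v=\frac{1}{|x|^4}\sum_{i,j}x_ix_jf_{ij}.
\]
This is the claimed formula for $v$. Then $g=f-i_x^2v$ is forced, so the pair is unique.

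\emph{Existence.} Define $v$ as above and set $g:=f-i_x^2v$. Then
\[
j_x^2 g = j_x^2 f - |x|^4 v = 0 ,
\]
so $g$ has the required property. It remains to check that $g$ matches the displayed expression. Compute
\[
(i_x^2v)_{ij}=x_ix_j\,v=\sum_{k,\ell}\frac{x_ix_jx_kx_\ell}{|x|^4}f_{k\ell}.
\]
Therefore $g_{ij}=\sum_{k,\ell}\bigl(\delta_{ik}\delta_{j\ell}-x_ix_jx_kx_\ell|x|^{-4}\bigr)f_{k\ell}$. Applying $\sigma(i,j)$ changes nothing, because $f$ is symmetric and the second term is already symmetric in $i,j$. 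The resulting $g$ is symmetric, so $g\in S^2$.

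There is no real obstacle. The only points to handle carefully are the normalisation of $i_x^2$ acting on a scalar, which gives exactly $x\otimes x$ with no combinatorial factor, and the identity $j_x^2 i_x^2 v=|x|^4v$. Both follow directly from the definitions, and the general form of this identity is in \cite[Lemma 2.6.1]{Sharafutdinov_book}.
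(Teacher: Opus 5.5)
Your proof is correct and complete. The paper does not prove this lemma; it quotes the general results \cite[Lemma 3.2]{Rohit_Suman} and \cite[Lemma 2.6.1]{Sharafutdinov_book}, in which $v\in S^{m-k}$ is a genuine tensor.

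In that general setting the natural argument runs as follows. For $x\neq 0$ the map $i_x^k$ is injective and dual to $j_x^k$, so $S^m=\ker j_x^k\oplus\operatorname{im} i_x^k$. Explicit formulas then require inverting $j_x^k i_x^k$ on $S^{m-k}$, which in general is not a multiple of the identity. Already for $m=2$, $k=1$ one gets $j_x i_x w=\tfrac12\bigl(|x|^2w+\langle x,w\rangle x\bigr)$.

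You instead use the fact that for $m=k=2$ the potential is a scalar, so $j_x^2 i_x^2$ is just multiplication by $|x|^4$. This gives existence, uniqueness, the explicit formulas for $v$ and $g$, and smoothness off the origin in a few lines, without any appeal to duality or orthogonal complements.

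The general route yields the statement for all $(m,k)$, which the paper never needs. Your route is self-contained and uses only $|x|^4\neq 0$. It therefore applies verbatim to the complex-valued $\widehat f$, which is how the lemma is actually used in the proof of Theorem~\ref{th:new_decomposition}.
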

\noindent We are now ready to show a decomposition theorem for symmetric 2-tensor fields. To proceed, we introduce the \emph{mean-zero condition} for tensor fields. Let
$f \in \mathcal{S}(\mathbb{R}^2;S^2)$ be a symmetric $2$-tensor field. We say that $f$ satisfies the
\emph{mean-zero condition} if
$
\int_{\mathbb{R}^2} f_{ij}(x)\,dx = 0$ for  $1 \leq i,j \leq 2.$
For brevity, we shall write
\[
\int_{\mathbb{R}^2} f = 0
\]
to indicate that \(f\) satisfies the above mean-zero condition.
\vspace{2mm} \\
We are interested in decomposing a tensor field $f\in \sch(\mathbb{R}^2;S^2)$ as follows:
% \begin{align}\label{new_decomp_of_f_no_restriction}
%     f= g+\D^2 v;  \quad \ \delta^{2} g=0.
% \end{align}	
% with the property that $g(x), v(x) \rightarrow 0 \ \mbox{as}\ |x| \rightarrow \infty$ and satisfies the following improved decay condition:
% \begin{align}\label{decay_estimate}
% \abs{g}\lesssim (1+ \abs{x})^{-2}, \qum{and} \quad \abs{\D^\ell v}\lesssim (1+\abs{x})^{-\ell} , \qum{for $\ell=0,1$}.
% \end{align}
there exist uniquely determined smooth symmetric $2$-tensor field $g$ and  a scalar field $v$ such that
\begin{equation}\label{new_decomp_of_f_no_restriction}
%&\mbox{there exist uniquely determined smooth symmetric $2$-tensor field $g$ and  a scalar field $v$ such that}\\
\left\{
\begin{aligned}
&f = g+\D^2v, \quad
\delta^2 g =0, \\
&g(x),\,v(x) \to 0 \qquad \text{as } |x|\to\infty, \\
&|g(x)| \lesssim (1+|x|)^{-2},\quad
|\D^\ell v(x)| \lesssim (1+|x|)^{-\ell},
\qquad \ell=0,1.
\end{aligned}
\right.
\end{equation}
The tensor fields $g$ and $v$ will be referred to as the $2$-solenoidal and $2$-potential parts of $f$, respectively, in accordance with Definition~\ref{def:k-potential and k-solenoidal}.

\begin{theorem}
\label{th:new_decomposition}
 Let $ f \in \mathcal{S}(\mathbb{R}^2;S^2)$ be a symmetric $2$-tensor field in $\Rt$.  The following statements are equivalent.
\begin{enumerate}
        \item \label{A}  $f$ has mean-zero. 
        \item  \label{B}  $f$ satisfies the generalized solenoidal-potential decomposition stated in~\eqref{new_decomp_of_f_no_restriction}.\end{enumerate}
\end{theorem}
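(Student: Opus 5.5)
The plan is to work on the Fourier side and apply Lemma~\ref{th: decomposition of f in frequency variable} to $\widehat f$. Since $\widehat{\D^2 v}(\xi)=-i^2_\xi \widehat v(\xi)$ and $\widehat{\delta^2 g}(\xi)=-j^2_\xi\widehat g(\xi)$, the decomposition $f=g+\D^2 v$ with $\delta^2 g=0$ is equivalent, away from $\xi=0$, to $\widehat f=\widehat g+i^2_\xi(-\widehat v)$ with $j^2_\xi\widehat g=0$. The lemma then forces
\[
\widehat v(\xi)=-\sum_{i,j}\frac{\xi_i\xi_j}{|\xi|^4}\widehat f_{ij}(\xi),\qquad
\widehat g_{ij}(\xi)=\sigma(i,j)\sum_{k,\ell}\Bigl(\delta_{ik}\delta_{j\ell}-\frac{\xi_i\xi_j\xi_k\xi_\ell}{|\xi|^4}\Bigr)\widehat f_{k\ell}(\xi).
\]
This already gives uniqueness in (b). The difference of two decompositions has Fourier transform supported at the origin, so it is polynomial. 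The decay conditions $g,v\to0$ then rule out any nonzero polynomial in $g$ and $v$.

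For (a)$\Rightarrow$(b), I define $g,v$ by the formulas above and check smoothness and decay. The multiplier for $\widehat g$ is bounded and homogeneous of degree $0$, so $\widehat g\in L^\infty$ is smooth away from $0$, and $g$ is a tempered distribution. The real difficulty is $\widehat v$, because $\xi_i\xi_j/|\xi|^4$ is homogeneous of degree $-2$ and so is not locally integrable in $\Rb^2$. This is exactly where the mean-zero hypothesis enters. Since $\widehat f(0)=0$, Taylor expansion gives $\widehat f_{ij}(\xi)=O(|\xi|)$, so $\widehat v=O(|\xi|^{-1})$ near $0$. This makes $\widehat v$ locally integrable and a well-defined tempered distribution.

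For decay, I split $\widehat f=\widehat f(0)+\sum_k\xi_k\,\partial_k\widehat f(0)+R$ with $R=O(|\xi|^2)$, using cutoffs $\chi(\xi)$ near the origin. Then $\widehat v$ becomes a sum of homogeneous terms of degree $-1$ plus a remainder. By the appendix on homogeneous distributions, the inverse Fourier transform of a degree $-1$ homogeneous function in $\Rb^2$ is homogeneous of degree $-1$, so $v=O(|x|^{-1})$ at infinity, which gives $v\to0$. The remainder is $\chi\cdot O(1)$ near zero and Schwartz-like away from it, so it contributes faster decay; I would handle this by integrating by parts in $\xi$. Similarly, $\widehat{\D v}$ is homogeneous of degree $0$ near $0$, so $|\D v|$ is bounded. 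For $\widehat g$, the leading part near $0$ is a degree $1$ homogeneous term, whose inverse transform is homogeneous of degree $-3$. That is better than the required $(1+|x|)^{-2}$, and I would settle for the stated bound once the smooth remainders are controlled. Smoothness of $g$ and $v$ in $x$ follows because $\widehat g$ and $\widehat v$ decay rapidly at infinity.

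For (b)$\Rightarrow$(a), suppose the decomposition exists. By the lemma, $\widehat v$ agrees with the formula above on $\Rb^2\setminus\{0\}$, modulo distributions supported at $0$, which the decay assumptions exclude. Suppose $\widehat f(0)\neq0$. Then $\widehat v$ is homogeneous of degree $-2$ near the origin, so its inverse transform behaves like $\log|x|$ plus a degree-$0$ homogeneous function. This contradicts $v(x)\to0$. The exception is when $\sum\xi_i\xi_j\widehat f_{ij}(0)$ vanishes identically, which cannot happen unless $\widehat f(0)=0$ anyway. I could also conclude via $g$: the degree-$0$ term of $\widehat g$ at the origin produces a homogeneous distribution of degree $-2$ in $x$, which is compatible with the decay of $g$. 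So I would rely on $v$ and give the argument carefully using the log-type behavior from the appendix. I expect this careful treatment of degree $-2$ homogeneous symbols in $\Rb^2$, the critical dimension, to be the main obstacle.
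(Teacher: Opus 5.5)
Your proposal follows essentially the same route as the paper. You apply the pointwise lemma to $\widehat f$, use the mean-zero condition to get $\widehat v=O(|\xi|^{-1})$ near $0$, obtain uniqueness from Fourier transforms supported at the origin, and prove (b)$\Rightarrow$(a) from the $\log|x|$ plus degree-$0$ homogeneous behaviour produced by the degree $-2$ part of $\widehat v$. Extracting homogeneous leading terms with a cutoff $\chi$, rather than integrating by parts against symbol bounds, is only a technical variant; the cutoff actually avoids the paper's claim that the uncut remainder $II$ is integrable, which fails at infinity when $\widehat f(0)\neq 0$.

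One slip: (b) requires $|\D v(x)|\lesssim(1+|x|)^{-1}$, not just boundedness. Your own argument delivers this, since the degree-$0$ leading term of $\widehat{\D v}$ inverts to something that is $O(|x|^{-2})$ away from the origin. Alternatively, as in the paper, one integration by parts using $|\partial^\alpha(\xi\widehat v)|\lesssim|\xi|^{-|\alpha|}$ near $0$ suffices.
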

% \begin{theorem}\label{th:new_decomposition}
% Let $ f \in \mathcal{S}(S^2)$ be a symmetric $2$-tensor field satisfying $\displaystyle \int_{\Rt} f(x) dx = 0$. Then the decomposition \eqref{new_decomp_of_f_no_restriction} holds.
% % Then there exist uniquely determined smooth symmetric $2$-tensor field $g$ and  a scalar field $v$ satisfying    \begin{align}\label{new_decomp_of_f_no_restriction}
% %     f= g+\D^2 v;  \quad \ \delta^{2} g=0.
% % \end{align}	
% % The fields $g(x), v(x) \rightarrow 0 \ \mbox{as}\ |x| \rightarrow \infty$ with and satisfies the following improved decay condition:
% % \begin{align}
% % \abs{g}\lesssim (1+ \abs{x})^{-n}, \qum{and} \quad \abs{\D^\ell v}\lesssim (1+\abs{x})^{2-\ell-n} , \qum{for $\ell=0,1$}.
% % \end{align}
% % The tensor fields $g$ and $v$ will be referred to as the $2$-solenoidal and $2$-potential parts of $f$, respectively.\\

% \noindent Furthermore, if  the decomposition result \eqref{new_decomp_of_f_no_restriction} of $f \in\sch(\Rt)$ exists, then $f$ has mean-zero $i.e.$ $\int_{\Rt} f=0.$
% \end{theorem}

\begin{proof}
\textbf{Step 1.}
We  show that~\ref{A} implies~\ref{B}. We start by applying Lemma~\ref{th: decomposition of f in frequency variable} on $\widehat{f}$ to obtain a unique symmetric 2-tensor field $\widehat{g}$ and a scalar field  $\widehat{v}$ satisfying 
\begin{align}\label{eq:Fourier Decomposition}
\widehat{f}(y)=\widehat{g}(y) + i_{y}^{2}\widehat{v}(y) \ \ \mbox{ and }\    j_{y}^{2} \widehat{g}(y)=0.	\end{align}
Using expressions for $\widehat{g}$ and $\widehat{v}$ given in~\eqref{eq: expression of v and g}, we have that  both the fields $ \widehat{g}(y) $ and $ \widehat{v}(y) $ are smooth on $ \Rn_{0}=\Rn\setminus\{0\} $ and decay rapidly as $|y| \rightarrow \infty$. Our next step is to use our hypothesis to show $ \widehat{v}$ is integrable, which will allow us to prove the existence of $v$ by the Fourier inversion.  Recall from equation~\eqref{eq: expression of v and g}, we have
\begin{align*}
\widehat{v}(y)=  \sum_{i,j=1}^2\frac{y_{i} y_{j}}{|y|^{4}}\widehat{f}_{i j}(y), \qquad \mbox{ for } y \neq 0.
\end{align*}
Now the existence $v$  is guaranteed by taking inverse Fourier transform of  $ \wh{v}(y)$, however to achieve this we have to first show that $\wh{v}(y)\in L^1(\Rt)$.  Since $f$ is in $\mathcal{S}(\mathbb{R}^2;S^2)$, this implies  $\widehat{v}$ is an integrable function when $\abs{y}\ge 1$. Therefore, we only need to show the integrability of $\widehat{v}$ in a ball around the origin (say in $\{|y| \leq 1\}$). To accomplish this, we must show that $|\wh{v}|$ does not grow faster than $|y|$ in any small neighbourhood of $0$. Consider
\begin{align*}
\widehat{v}(y)&=  \sum_{i,j=1}^2\frac{y_{i} y_{j}}{|y|^{4}}\widehat{f}_{i j}(y)\\
&= \sum_{i,j=1}^2 \frac{y_{i} y_{j}}{|y|^{4}}\left\{\widehat{f}_{ij}(0) + \sum_{k=1}^2 y_k \frac{\partial \widehat{f}_{ij}}{\partial y_k} (0) +\sum_{k,\ell=1}^2 y_k y_\ell \frac{\partial^2 \widehat{f}_{ij}}{\partial y_k \partial y_\ell} (0) + \cdots \right\}, \quad  \mbox{using Taylor's formula}\\
&=  \sum_{i,j, k=1}^2 \frac{y_{i} y_{j} y_k}{|y|^{4}} \frac{\partial \widehat{f}_{ij}}{\partial y_k} (0) +  \sum_{i, j, k,\ell=1}^2 \frac{y_{i} y_{j} y_k y_\ell}{|y|^4} \frac{\partial^2 \widehat{f}_{ij}}{\partial y_k \partial y_\ell} (0) + \cdots +  \text{ lower order terms},  \mbox{ because } \widehat{f}_{ij}(0) = 0.
\end{align*}
From the above relation, we have that
 \begin{align*}
 |\widehat{v}(y)| \leq |y|^{-1}\left(C+ \mathcal{G}(y)\right),\qum{for $|y| \leq 1$}.
 \end{align*}
where $C$ is a constant and $\mathcal{G}(y)$ is a bounded integrable function of $y$. This shows that $\wh{v}(y)\in L^1(\Rt)$, therefore by~\cite[Proposition 3.1]{Dorina}  we have that $v(x)\in C(\Rt)$ and $v(x) \rightarrow 0$ as $\abs{x}\rightarrow \infty$.  To show that $v\in C^{\infty} (\Rt)$, it is enough to show that $y^{\alpha} \wh{v}(y)\in L^1(\Rt)$ for any multi-index $\alpha$. We consider the case when $\abs{\alpha}\ge 1$ because $\abs{\alpha}=0$ case is already considered above. Since $\displaystyle  \widehat{v}(y)=  \sum_{i,j=1}^2 \frac{y_{i} y_{j}}{|y|^{4}}\widehat{f}_{i j}(y)$, this gives $\displaystyle y^{\alpha} \wh{v}(y) =  \sum_{i,j=1}^2 y^{\alpha}\frac{y_{i} y_{j}}{|y|^{4}}\widehat{f}_{i j}(y) \in L^1(\Rt) $ for any multi-index with $\abs{\alpha}\ge 1$. Hence we obtain that $\PD^{\alpha} v\in C(\Rt)$ for all multi-index $\alpha$, and consequently $v\in C^{\infty} (\Rt)$.
\vspace{2mm}\\ Similarly, from~\eqref{eq: expression of v and g} we have that 
\begin{align}
   \wh{g}_{ij}(y) 
= \sigma(i,j)\sum_{k,\ell=1}^2\left( \delta_{ik}\delta_{j\ell} - \frac{y_{i} y_{j}y_{k} y_{\ell}}{|y|^{4}}\right)\wh{f}_{k \ell}(y),
\end{align}
which is an integrable function for each $1\le i,j\le 2$. 
In a similar vain, one can show that $ y^{\alpha}\wh{g}_{ij}(y) \in L^1(\Rt) $ for any multi-index $\alpha$. This further gives $g_{ij}\in C^{\infty}(\Rt) $ and $g_{ij}(x) \rightarrow 0$ as $\abs{x}\rightarrow \infty$ for each $i,j$.

Thus, we obtain the integrability of $\widehat{v}$ and $\wh{g}_{ij}$. Hence, by using the inverse Fourier transform to~\eqref{eq:Fourier Decomposition}, we get the following decomposition:
		\begin{align*}
		f= g+\D^2 v;  \quad \ \delta^{2} g=0.
		\end{align*}
It remains to verify that $g$ and $v$ satisfy the decay properties stated in~\eqref{new_decomp_of_f_no_restriction}.  We use a similar argument used in the proof of~\cite[Theorem 3.4]{Rohit_Suman}. Next using $\displaystyle \wh{v}(y)= \sum_{i,j=1}^2\frac{y_{i} y_{j}}{|y|^{4}}  \widehat{f}_{ij}$ and the fact that $\wh{f}_{ij}(0)=0$ we conclude  
\begin{align*}
 \abs{  D^{\alpha} \wh{v}(y)}\le C \abs{y}^{-1-|\alpha|}, \qum{and} \quad  \abs{i^2_y \wh{v}(y)} =  \abs{y_i y_j\, \wh{v}(y)} \le C \abs{y}.
\end{align*}
This further gives  $\abs{\wh{g}(y)}=\abs{\wh{f}(y)- i^2_y \wh{v}(y)} \le   C \abs{y}$. Moreover, one has 
\begin{align}\label{estimate_of_g}
    \abs{ D^{\alpha} \wh{g}(y)}\le   C \abs{y}^{1-|\alpha|}.
\end{align}
Next we write $g(x)= \int_{\Rt} e^{\I x\cdot y} \wh{g}(y) dy$, and  for any multi-index $\alpha$ obtain
\begin{align}
    x^{\alpha} g(x) = \int_{\Rt}   x^{\alpha} \, e^{\I x \cdot y} \wh{g}(y) dy=  \I^{-|\alpha|}\,\int_{\Rt}    \, \wh{g}(y) \, \p^{\alpha}(e^{\I x \cdot y})  dy.
\end{align}
 At this stage, applying  integration by parts and  using  the estimate of~\eqref{estimate_of_g} we obtain $\abs{x^{\alpha} g(x) }$ is finite if $\abs{\alpha}\le 2$. This gives 
 \begin{align}
     \abs{ g(x) }\le C (1+|x|)^{-2}.
 \end{align}
 This completes the proof of the decay estimate for $g$. By a similar argument, one can establish the corresponding decay estimate for $v$.  This completes the proof of existence part. 
 
 We now focus on the uniqueness part.
 Assume if possible, we have two such decomposition of $f$, that is, there are $g_1,\  g_2, \ v_1$ and $v_2$ satisfying 
\begin{align*}
    g_1 + \D^2 v_1 =  f =     g_2 + \D^2 v_2, \quad \mbox{ and } \quad  \delta^{2}g_1=0 =   \delta^{2}g_2\\
 \Rightarrow   (g_1 -g_2) + \D^2 (v_1 -v_2) =  0, \quad \mbox{ and } \quad  \delta^{2}(g_1-g_2)=0.
\end{align*}
To prove the uniqueness of the decomposition, it is enough to prove that  $ f=0 $ implies  $ g=v=0 $. Now $f=0$ gives $ g+\d^{2}v=0 $ and $ \delta^{2}g=0 $. Since $ g \in \mathcal{S}'(\mathbb{R}^2;S^2) $ and $ v\in \mathcal{S}'(\mathbb{R}^2;\Rb^2) $, where $ \mathcal{S}' $ denotes the space of tempered distributions. Applying Fourier transform  to the equations $ g+\d^{2}v=0 $ and $ \delta^{2}g=0 $, we get $ \widehat{g}(y)+ \I^{2} i_y^2\widehat{v}(y)=0 $ and $ j_y^{2} \widehat{g}(y)=0$. By Lemma~\ref{th: decomposition of f in frequency variable} we have $ \widehat{g}(y)=\widehat{v}(y)=0 $ in $ \Rb^2_0= \Rb^2\setminus\{0\} $, $i.e., $ the support of distributions is contained in $ \{0\}.$ Thus $ \widehat{g} $ and $ \widehat{v} $ can be written as a finite linear combination of derivatives of the Dirac delta distribution, see~\cite[Theorem 3.2.1]{Friedlander}.
Therefore
 \[ \widehat{g} = \sum\limits_{|\alpha|\le p} c_{\alpha} \PD^{\alpha} \delta_{0}, \quad \mbox{ for some positive integer } p, \] 
 where $ \delta_0 $ is the Dirac delta distribution. 
 Again $ \PD^{\alpha}\delta_0 \in \mathcal{S}'(\Rb^2)$ for any multi-index $ \alpha $. Taking the inverse Fourier transform of this equation in the sense of tempered distributions, we obtain that $ g$ is a polynomial of degree at most $ p$. But $ g(x) \rightarrow 0$ as $ |x| \rightarrow \infty$  implies $ g = 0 $ in $ \Rb^2 $.
One can argue similarly and conclude that $ v(x) =0$ in $ \Rb^2 $. \smallskip

 \textbf{Step 2.} We now prove that~\ref{B} implies~\ref{A}.  Suppose $f\in \sch(\mathbb{R}^2;S^2)$ satisfies the decomposition given in equation~\eqref{new_decomp_of_f_no_restriction}, that is, 
$$ f = g +\D^2 v, \qquad \mbox{ with } \quad \delta^2 g=0,$$
where $g$ and $v$ are smooth and satisfy~\eqref{new_decomp_of_f_no_restriction}. This  implies that we can think of $f$, $g$, and $v$ as tempered distributions. Taking the Fourier transform of  $ f = g +\D^2 v $ in the sense of distributions, we obtain
\begin{align*}
    \wh{f}_{ij}(y)= \wh{g}_{ij}(y) - y_{i}y_{j} \wh{v}(y) \implies \wh{v}(y) = \sum_{i,j=1}^2 \frac{y_i y_j \, \wh{f}_{ij}(y)}{|y|^4} \quad \mbox{in the sense of distributions}.
\end{align*}
Next rewrite $\wh{v} $ as 
\begin{align*}
 \wh{v}(y) = \sum_{i,j=1}^2 \frac{y_i y_j \, \wh{f}_{ij}(0)}{|y|^4}+ \sum_{i,j=1}^2 \frac{y_i y_j \,( \wh{f}_{ij}(y) - \wh{f}_{ij}(0))}{|y|^4} =I(y) +II(y).   
\end{align*}
Next using Taylor theorem we can write $ \displaystyle \wh{f}_{ij}(y) - \wh{f}_{ij}(0)=  \sum_{k=1}^2 y_k \frac{\partial \widehat{f}_{ij}}{\partial y_k} (sy)$ for some  real number $s$ with $ 0< s< 1 $.   Since $f$ is Schwartz, and  $ \wh{f}$ is also Schwartz, this implies $$  \wh{f}_{ij}(y) - \wh{f}_{ij}(0)= \sum_{k=1}^2 y_k \frac{\partial \widehat{f}_{ij}}{\partial y_k} (sy) \in \sch(S^2).$$  This further  entails 
$$II(y) = \sum_{i,j=1}^2 \frac{y_i y_j \,( \wh{f}_{ij}(y) - \wh{f}_{ij}(0))}{|y|^4} = \sum_{i,j, k=1}^2\frac{y_i y_jy_k}{\abs{y}^4}  \frac{\partial \widehat{f}_{ij}}{\partial y_k} (sy) \in L^1(\Rt).$$ 
Therefore the inverse  Fourier transform of $II(y)$ is continuous and has a decay at infinity by Riemann-Lebesgue lemma. Similarly, one can show that $ y^{\alpha} II(y)\in L^1(\Rt) $ for any multi-index $\alpha$ with $\abs{\alpha}\ge 1.$  This gives $ \p^{\alpha} \wh{II}(y) $ is continuous, consequently we have that 
\begin{align}\label{smoothness_of_II}
    \wh{II}(y)\in C^{\infty}(\Rt) \qum{ and all the partial derivatives $\p^{\alpha} \wh{II}(y)$ converge to $o$   decay when $|x| \rightarrow\infty $. }
\end{align}
We now focus on the term $\displaystyle I(y) =  \sum_{i,j=1}^2 \frac{y_i y_j \, \wh{f}_{ij}(0)}{|y|^4} $ which is homogeneous of  degree $-2$. This shows that $I(y)$ is not a locally integrable function; however, one could still interpret $I(y)$ as a tempered distribution in the principal value sense (see~\cite[Section 8 in  Chapter 3]{Taylor_book} for details), see also~\cite{Hormander_I}. More details of homogeneous distributions can be found in the Appendix~\ref{appen_homo_dis}.
%\ssc{Put details of homogeneous distribution of degree $-2$ here or in the appendix $?$}
% \antti{Needs a citation, Fourier analysis book, or distribution theory book.}). 
Assume that $I(y)$ is viewed as a tempered distribution, our next goal is to find the inverse Fourier transform of $I$. This will give rise a $\log$  term. To see this, we compute $ \PD_{ij} (\log \abs{y}) $. This is given by $\PD_{ij} (\log \abs{y}) = \frac{\delta_{ij}}{\abs{y}^2}- \frac{2 y_iy_j}{\abs{y}^4}.$
% \begin{align*}
%  &\PD_{ij} (\log \abs{y}) = \frac{\delta_{ij}}{\abs{y}^2}- \frac{2 y_iy_j}{\abs{y}^4}
%  \end{align*}
 Taking Fourier transform of this, we obtain 
 \begin{align*}
 &\widehat{\frac{2 y_iy_j}{\abs{y}^4}}(x) = \widehat{\delta_{ij} |y|^{-2}} + x_i x_j \widehat{\log\abs{y}}\implies \widehat{\frac{2 y_iy_j}{\abs{y}^4}}(x) = \delta_{ij} \log\abs{x} +c  \frac{x_i x_j}{\abs{x}^2}.
\end{align*}
To compute the Fourier transform of $\abs{y}^{-2}$ in $\sch'(\Rt)$ we have used the fact that $ \log \abs{x}$ is the fundamental solution of $ \Delta$ in $\mathbb{R}^2$, $i.e., \Delta (\log \abs{x}) = c\, \delta$. This further entails $ \abs{y}^2 \wh{\log|x|}(y)= C.  $
\vspace{2mm}\\
This implies after taking inverse Fourier transform and using~\eqref{smoothness_of_II} we write $v$ as
%\antti{Maybe quantify what "nice term" means.} 
\begin{align}
    v= \wh{I}+ \wh{II}=  \sum_{i,j=1}^2\wh{f}_{ij}(0) \left( \delta_{ij} \log\abs{x} +c  \frac{x_i x_j}{\abs{x}^2}\right) + \mbox{$C^{\infty}$ smooth  decaying term}.
\end{align}
Since we started with the decomposition result~\eqref{new_decomp_of_f_no_restriction}, this implies for  $v \in C^{\infty}(\Rt) $ and to have a decay at infinity, we must have 
\[  \sum_{i,j=1}^2 \wh{f}_{ij}(0) \left( \delta_{ij} \log\abs{x} +c  \frac{x_i x_j}{\abs{x}^2}\right)=0 \qum{for all $x\in \Rt$}.\]
This implies  as $\abs{x}\rightarrow \infty$ we have that the $\log(\abs{x})$ term will blow up and the term $ \frac{x_i x_j}{\abs{x}^2}$ is direction dependent fixed constant which is not zero. This forces $  \sum_{i,j=1}^2\omega_i  \wh{f}_{ij}(0)  \omega _j=0$ for all unit vectors $\omega$.  As a result we have that $\wh{f}_{ij}(0)=0$ for all $i,j$. This also implies $\sum_{i=1}^2 \wh{f}_{ii}(0)  \log (\abs{x})=0$. Therefore, we must have $ \wh{f}_{ij} (0)=\int_{\Rt} f_{ij} =0$. The proof is complete.
\end{proof}
   % \ssc{We will discuss the remark later.}
% \begin{remark}
% In general, the following decomposition result holds in $\Rb^2$
% for any $f\in \sch(S^2)$ \[  f = g+ \D^2 v \qum{for some $v=v_1+v_2+v_3$ and $g\in C^{\infty}(\Rt) $ with $\delta^2 g=0$,}\] where $v_1\in C^{\infty}(\Rt), v_2,v_3\in C^{\infty}(\Rt\setminus\{0\}) $ and $v_2$ solves $ \Delta v_2= \delta$ and $v_3(x) = \displaystyle \frac{x_ix_j}{\abs{x}^2}$ is a homogeneous polynomial of degree $0$ in $\Rt\setminus\{0\}$. Note that $v_1$ and $g$ have decay at infinity.
% \end{remark}
\subsection{Decomposition result for elastic tensors}
\label{sec:elastic-decomp}

In this section, we prove that an elastic tensor field in $2$ dimensions and be uniquely decomposed into solenoidal and potential parts. The terms solenoidal and potential tensor fields have a different, but analogous, meaning than usually in tensor field decomposition literature, or earlier in this article. The operators $H$ and $K$ and their formal adjoints were defined in Section~\ref{sec:Def and notation}.

\begin{definition}
An elastic tensor field $f \in C^{\infty}(\mathbb{R}^n;E^2(n))$ is called
\begin{enumerate}
    \item solenoidal if $H^*f = K^*f = 0$, and
    \item potential if $f = Hv + Ku$ for some $u \in C^\infty(\mathbb{R}^n;\mathbb{R}^n)$ and $v \in C^\infty(\mathbb{R}^n;E^1(n))$.
\end{enumerate}
\end{definition}

% In Section~\ref{sec:Def and notation}, we defined the operator $H \colon \mathcal{S}(\mathbb{R}^n;E^1(n)) \to \mathcal{S}(\mathbb{R}^n;E^2(n))$ and its formal $L^2$-adjoint $H^* \colon \mathcal{S}(\mathbb{R}^n;E^2(n)) \to \mathcal{S}(\mathbb{R}^n;E^1(n))$ by
% \begin{equation}
% (Hv)_{ijkl}
% =
% \frac{1}{2}
% \left(
% \frac{\partial^2 v_{kl}}{\partial x_i\partial x_j}
% +
% \frac{\partial^2 v_{ij}}{\partial x_k\partial x_l}
% \right)
% \quad\text{and}\quad
% (H^*w)_{ij}
% =
% \sum_{k,l}
% \frac{\partial^2w_{ijkl}}{\partial x_k\partial x_l}.
% \end{equation}

We will heavily rely on Fourier transforms to prove existence of solenoidal-potential decompositions of elastic tensor fields. For this purpose, we define the following operators, which can be understood as pointwise Fourier analogs of the operators $H$ and $K$ defined in Section~\ref{sec:Def and notation}. Given $y \in \mathbb{R}^n \setminus \{0\}$, we define the operators $\widehat{H}_y \colon E^1(n) \to E^2(n)$ and $\widehat{H}^*_y \colon E^2(n) \to E^1(n)$ by
\begin{equation}
(\widehat{H}_y\widehat v)_{ijkl}
=
\frac{1}{2}(y_iy_j\widehat v_{kl} + y_ky_l\widehat v_{ij})
\quad\text{and}\quad
(\widehat{H}^*_y\widehat w)_{ij}
=
\sum_{k,l}
y_ky_l\widehat w_{ijkl}.
\end{equation}
Here $\widehat v \in E^1(n)$ and $\widehat w \in E^2(n)$ are only vectors and tensors. However, as the notation suggests taking Fourier transforms gives $\widehat{(Hv)}(y) = \widehat{H}_y\widehat v(y)$ where $\widehat v$ is now the Fourier transform of $v \in \mathcal{S}(\mathbb{R}^n;E^1(n))$.

% Similarly, we define the operator $K \colon \mathcal{S}(\mathbb{R}^n;\Rb^n) \to \mathcal{S}(\mathbb{R}^n;E^2(n))$ by $Ku =\varepsilon(du \otimes I)$ where $I$ is the identity matrix. In coordinates, we have
% \begin{equation}
% (Ku)_{ijkl}
% =
% \frac14
% \left(
% \frac{\partial u_j}{\partial x_i}
% +
% \frac{\partial u_i}{\partial x_j}
% \right)
% \delta_{kl}
% +
% \frac14
% \left(
% \frac{\partial u_k}{\partial x_l}
% +
% \frac{\partial u_l}{\partial x_k}
% \right)
% \delta_{ij}.
% \end{equation}
% The formal $L^2$-adjoint of $K$ is the operator $K^* \colon \mathcal{S}(\mathbb{R}^n;E^2(n)) \to \mathcal{S}(\mathbb{R}^n;\Rb^n)$ defined by
% \begin{equation}
% (K^*w)_i
% =
% -
% \sum_{j,k}
% \frac{\partial w_{ijkk}}{\partial x_j}.
% \end{equation}
Similarly, the pointwise Fourier counterparts to the operators $K$ and $K^*$ (also defined in Section~\ref{sec:Def and notation}) are
\begin{equation}
(\widehat{K}_y\widehat{u})_{ijkl}
=
\frac{1}{4}
(y_i\widehat u_j + y_j\widehat u_i)
\delta_{kl}
+
\frac{1}{4}
(y_k\widehat u_l + y_l\widehat u_k)
\delta_{ij},
\quad\text{and}\quad
(\widehat{K}^*_y\widehat w)_i
=
\sum_{j,k}
y_jw_{ijkk}.
\end{equation}
We aim to use these differential operators to characterize the kernel of $X^2$. The approach is similar to the one taken in Section~\ref{sec:Decomposition Results}. First, we show that certain pointwise decompositions exist on the Fourier domain. Then to obtain the decomposition for tensor fields we must analyze the point dependence of the Fourier decomposition and to use Fourier inversion.

The following pointwise decomposition in the Fourier domain was proved in~\cite{ilmavirta2025elasticraytransform}. 

\begin{lemma}[{\cite[Lemma 10]{ilmavirta2025elasticraytransform}}]
Let $y \in \mathbb{R}^n \setminus \{0\}$. Then the space of elastic $2$-tensors admits the following (interior) direct sum decomposition
\[ E^2(n) = A_y \oplus B_y \oplus C_y, \]
where
\begin{align*}
    A_y &:= \{ \widehat{K}_y \widehat u : \widehat u \in \mathbb{R}^n,\; u \perp y\}, 
    \qquad B_y := \{ \widehat{H}_y \widehat v : \widehat v \in E^1(n) \} \\
     C_y &:= \{ \widehat g \in E^2(n) : \widehat{K}_y^{*} \widehat g = \widehat{H}_y^{*} \widehat g = 0 \}.
\end{align*}
% \[ A_p := \{ K_p W : W \in \mathbb{R}^n,\; W \perp p \}, \]
% \[ B_p := \{ H_p h : h \in E^1(n) \}, \]
% and
% \[ C_p := \{ S \in E^2(n) : K_p^{*} S = H_p^{*} S = 0 \}. \]
Moreover, the projections onto the subspaces $A_y$, $B_y$, and $C_y$ depend $0$-homogeneously on $y$.
\end{lemma}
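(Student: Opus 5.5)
The plan is a linear-algebra argument on the finite-dimensional space $E^2(n)$, for fixed $y \neq 0$, with the Euclidean (Frobenius) inner product. In that inner product $\widehat H^*_y$ and $\widehat K^*_y$ are the adjoints of $\widehat H_y$ and $\widehat K_y$. This holds because $\widehat H_y \widehat v = \varepsilon(y\otimes y\otimes \widehat v)$ and $\widehat K_y\widehat u = \varepsilon((y\otimes\widehat u)\otimes I)$ with $\varepsilon$ an orthogonal projection; the factor $\tfrac12$ conventions only rescale and do not affect images or kernels. Consequently
\[
C_y = \ker \widehat H_y^* \cap \ker \widehat K_y^* = (\operatorname{im}\widehat H_y + \operatorname{im}\widehat K_y)^{\perp}.
\]
This gives $E^2(n) = (\operatorname{im}\widehat H_y+\operatorname{im}\widehat K_y)\oplus C_y$ orthogonally.

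It remains to show two things: that $\operatorname{im}\widehat H_y + \operatorname{im}\widehat K_y = A_y + B_y$, and that $A_y\cap B_y=\{0\}$.

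For the first, split $\widehat u = \lambda y + w$ with $w\perp y$. Then
\[
(\widehat K_y(\lambda y))_{ijkl} = \tfrac{\lambda}{2}\,(y_iy_j\delta_{kl} + y_ky_l\delta_{ij}) = \lambda\,\widehat H_y(I)_{ijkl},
\]
so this component already lies in $B_y$. Hence $\operatorname{im}\widehat K_y \subset A_y + B_y$, and the reverse inclusion is trivial.

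For trivial intersection, suppose $\widehat K_y w = \widehat H_y \widehat v$ with $w\perp y$. By rotation and scaling invariance, take $y=e_1$. Then $\widehat H_y\widehat v$ has the form $\tfrac12(\delta_{i1}\delta_{j1}\widehat v_{kl} + \delta_{k1}\delta_{l1}\widehat v_{ij})$, so its entries with both $\{i,j\}\neq\{1,1\}$ and $\{k,l\}\neq\{1,1\}$ vanish. On the other hand, $\widehat K_y w$ has
\[
(\widehat K_y w)_{1j22} = \tfrac14 w_j \quad\text{for } j\geq 2
\]
(when $n\ge2$), coming from the $\delta_{kl}$ term. Matching these entries forces $w=0$, and then $\widehat H_y \widehat v = 0$.

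The step I expect to need the most care is this index bookkeeping. One must check that the chosen entry $(1,j,2,2)$ really is nonzero in $\widehat K_y w$ after the symmetrization $\varepsilon$, so that no cancellation occurs, while it is zero in $\operatorname{im}\widehat H_y$. This works for all $n\geq 2$; in particular it covers $n=2$ with $j=2$, since then $\{i,j\}=\{1,2\}$ and $\{k,l\}=\{2,2\}$ are both different from $\{1,1\}$.

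Finally, the $0$-homogeneity of the projections. Replacing $y$ by $ty$ with $t\neq0$ leaves each subspace unchanged, since $\widehat H_{ty} = t^2\widehat H_y$, $\widehat K_{ty}=t\widehat K_y$, and $(ty)^\perp = y^\perp$. Therefore $A_y$, $B_y$, $C_y$ depend only on the line through $y$, and so do the projections determined by the direct sum.
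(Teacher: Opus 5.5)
Your argument is correct. The stated $\widehat H^*_y$ and $\widehat K^*_y$ are exact Frobenius adjoints of $\widehat H_y$ and $\widehat K_y$; this uses the block symmetry of elements of $E^2(n)$. The identity $\widehat K_y(y)=\widehat H_y(I)$ holds. The entry $(\widehat K_{e_1}w)_{1j22}=\frac14 w_j$ does separate $A_y$ from $B_y$ for every $n\ge 2$.

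The paper does not prove this lemma at all; it quotes it from \cite[Lemma 10]{ilmavirta2025elasticraytransform}. Its own argument in this direction, Lemma~\ref{lem:E2-decomposition}, goes differently. It assumes $\widehat f=\widehat H_y\widehat v+\widehat K_y\widehat u+\widehat g$, applies $\widehat K^*_y$ and $\widehat H^*_y$, and solves the resulting linear system explicitly for $\widehat u$ and $\widehat v$. The nonvanishing factor $n-1$ there is the quantitative counterpart of your check $A_y\cap B_y=\{0\}$.

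That computation yields concrete formulas whose smoothness for $y\neq 0$, homogeneity degrees ($-1$ for $\widehat u$, $-2$ for $\widehat v$) and Taylor behaviour at $y=0$ are exactly what the Fourier-inversion argument in Theorem~\ref{th:decomposition_elastic} needs. On its own, however, it only proves uniqueness, so existence there implicitly leans on the cited lemma.

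Your route differs in three ways:
\begin{enumerate}
\item It gets existence for free from $C_y=(\operatorname{im}\widehat H_y+\operatorname{im}\widehat K_y)^{\perp}$.
\item It explains the constraint $\widehat u\perp y$ through the single overlap $\widehat K_y(y)=\widehat H_y(I)$.
\item It obtains $0$-homogeneity by a one-line scaling.
\end{enumerate}
What it does not produce is explicit formulas. The homogeneity degrees of $\widehat u$ and $\widehat v$ would still follow from the same scaling, once you note that $\widehat H_y$ and $\widehat K_y|_{y^\perp}$ are injective; the latter is precisely your entry check.
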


We add on to the above lemma by showing how to compute $\widehat u$, $\widehat v$ and $\widehat g$ in terms of $\widehat f$ (see Lemma~\ref{lem:E2-decomposition}). The explicit formulas, in particular, the homogeneities in $y$ are important to make conclusions about Fourier transforms of certain tensor fields later.

\begin{lemma}[Decomposition of elastic $2$-tensors]
\label{lem:E2-decomposition}
Let $y \in \mathbb{R}^n\setminus \{0\}$. For any elastic tensor $\widehat f \in E^2(n)$ there are unique $\widehat u \in \mathbb{R}^n$, $\widehat v \in E^1(n)$ and $\widehat g \in E^2(n)$ so that $\widehat f = \widehat{H}_y \widehat v + \widehat{K}_y \widehat u + \widehat g$ with $\widehat{H}^*_y\widehat g = \widehat{K}^*_y\widehat g = 0$ and $\widehat u \perp y$.

Moreover, $\widehat v$ and $\widehat u$ can be explicitly computed in terms of $\widehat f$ by
\begin{equation}
\label{eqn:hat-u}
\widehat u_i
=
\frac{4}{n-1}
\left(
\frac{1}{\abs{y}^2}
\sum_j
\varepsilon_{ij}(y)
(\widehat{K}^*_y\widehat f)_j
-
\frac{1}{\abs{y}^4}
\sum_k
y_k(\widehat{H}^*_y\widehat f)_{ik}
+
\frac{y_i}{\abs{y}^6}
\sum_{i,k}
y_iy_k
(\widehat{H}^*_y\widehat f)_{ik}
\right)
\end{equation}
and
\begin{equation}
\label{eqn:hat-v}
\widehat v_{ij}
=
\frac{2}{\abs{y}^4}
(\widehat{H}^*_y\widehat f)_{ij}
-
\frac{y_iy_j}{\abs{y}^8}
\sum_{k,l}y_ky_l
(\widehat{H}^*_y\widehat f)_{kl}
-
\frac{1}{2\abs{y}^2}(y_j\widehat u_i + y_i\widehat u_j)
\end{equation}
where $\varepsilon_{ij}(y) = \delta_{ij}-\frac{y_iy_j}{\abs{y}^{2}}$.
\end{lemma}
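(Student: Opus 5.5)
The plan is to take existence and uniqueness of the splitting from \cite[Lemma 10]{ilmavirta2025elasticraytransform}. We set $\widehat g$ to be the $C_y$-component of $\widehat f$, $\widehat{H}_y\widehat v$ its $B_y$-component, and $\widehat{K}_y\widehat u$ its $A_y$-component. Uniqueness of $\widehat u$ and $\widehat v$ themselves, and not only of the components, will follow from the explicit inversion formulas derived below: they show that the parameters are recovered from $\widehat f$. It also helps to check directly that $\widehat{H}_y$ is injective on $E^1(n)$ and $\widehat{K}_y$ is injective on $y^\perp$, both for $y\neq 0$. So the real content of the proof is the derivation of \eqref{eqn:hat-u} and \eqref{eqn:hat-v}. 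To get them, we apply $\widehat{H}^*_y$ and $\widehat{K}^*_y$ to the identity $\widehat f = \widehat{H}_y \widehat v + \widehat{K}_y \widehat u + \widehat g$. Since both adjoints annihilate $\widehat g$, this leaves two linear equations in the unknowns $(\widehat u,\widehat v)$.

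\textbf{Step 1: the four adjoint compositions.} A direct index computation, which I expect to be routine, gives
\begin{align*}
(\widehat{H}^*_y\widehat{H}_y\widehat v)_{ij} &= \tfrac12\bigl(y_iy_j\,(y\cdot\widehat v y) + \abs{y}^4\widehat v_{ij}\bigr), &
(\widehat{H}^*_y\widehat{K}_y\widehat u)_{ij} &= \tfrac{\abs{y}^2}{4}(y_i\widehat u_j+y_j\widehat u_i),\\
(\widehat{K}^*_y\widehat{H}_y\widehat v)_i &= \tfrac{\abs{y}^2}{2}\bigl(y_i\,\mathrm{tr}\,\widehat v + (\widehat v y)_i\bigr), &
(\widehat{K}^*_y\widehat{K}_y\widehat u)_i &= \tfrac{n}{4}\abs{y}^2\widehat u_i .
\end{align*}
In the second and fourth identities we use $\widehat u\perp y$: the term containing $y\cdot\widehat u$ then drops out.

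\textbf{Step 2: solve the $\widehat{H}^*_y$ equation for $\widehat v$ in terms of $\widehat u$.} Write $F:=\widehat{H}^*_y\widehat f$. Contracting $F$ with $y$ twice kills the $\widehat u$ terms, because $\widehat u\perp y$. This leaves $y\cdot F y = \abs{y}^4\, y\cdot\widehat v y$, which determines the scalar $y\cdot \widehat v y$. Substituting this back into the $\widehat{H}^*_y$ equation gives exactly \eqref{eqn:hat-v}, with $\widehat u$ still present in the last term.

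\textbf{Step 3: determine $\widehat u$.} We insert \eqref{eqn:hat-v} into the $\widehat{K}^*_y$ equation $\widehat{K}^*_y\widehat f = \widehat{K}^*_y\widehat{H}_y\widehat v + \tfrac n4\abs{y}^2\widehat u$. Then we apply the projection $\varepsilon(y)$ onto $y^\perp$. This removes the $y_i\,\mathrm{tr}\,\widehat v$ term and leaves $\widehat u$ unchanged. The term $\varepsilon(y)\widehat v y$ contributes $\frac{2}{\abs{y}^4}\varepsilon(y)Fy$ and $-\frac{1}{2}\widehat u$, since $\varepsilon(y)y=0$ and $\widehat u\perp y$. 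Multiplied by $\abs{y}^2/2$, this gives a total coefficient $\frac{n-1}{4}\abs{y}^2$ in front of $\widehat u$. Dividing by it and expanding $\varepsilon(y)Fy = Fy - \frac{y(y\cdot Fy)}{\abs{y}^2}$ yields \eqref{eqn:hat-u}, with the factor $\frac{4}{n-1}$. Note that $n\ge2$ is exactly what makes this coefficient nonzero.

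I expect the main obstacle to be bookkeeping rather than anything conceptual: the symmetrization conventions of $\varepsilon$ and $\sigma^{m,m}$ fix the factors $\tfrac12$ and $\tfrac14$, and the sign convention in $\widehat{K}^*_y$ has to be tracked (the $-$ in $K^*$ is absorbed by $\I$ under the Fourier transform). I would therefore verify the four identities of Step 1 carefully, for instance by testing them on $\widehat v = e_1\otimes e_1$ and $\widehat u=e_2$ with $y=e_1$. As a final consistency check, one can confirm that $\widehat g := \widehat f-\widehat{H}_y\widehat v-\widehat{K}_y\widehat u$, with these formulas, is annihilated by both adjoints.
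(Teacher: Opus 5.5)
Your proposal is correct and follows the paper's approach. You apply $\widehat{K}^*_y$ and $\widehat{H}^*_y$ to the decomposition, use the four composition identities (yours agree with the paper's), and solve the resulting linear system; projecting with $\varepsilon(y)$ is just a tidier way to eliminate $\mathrm{tr}\,\widehat v$ and $\widehat v y$ than the paper's trace computation and subtraction of two auxiliary equations. Taking existence from the cited Lemma 10 is more careful than the paper's remark that one may simply define the parts by the formulas. Your final consistency check would also give existence on its own: it does go through, because $\mathrm{tr}(\widehat{H}^*_y\widehat f)=y\cdot\widehat{K}^*_y\widehat f$ by the block symmetry of $\widehat f$.
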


\begin{proof}
We start by proving uniqueness of such a decomposition. Assume that $\widehat{f} \in E^2(n)$, $ \widehat g \in E^2(n)$, $\widehat v \in E^1(n)$ and $\widehat u \in \mathbb{R}^n$ are such that
\begin{equation}
\label{eqn:assumed-decomposition}
\widehat f
=
\widehat{H}_y\widehat v
+
\widehat{K}_y\widehat u
+
\widehat g
\quad\text{with}\quad
\widehat{H}^*_y\widehat g
=
\widehat{K}^*_y\widehat g
=
0
\quad\text{and}\quad
\widehat u \perp y.
\end{equation}
Uniqueness of the decomposition is proved by showing that given this decomposition of $\widehat f$ then $\widehat u$ and $\widehat v$ are necessarily of the form~\eqref{eqn:hat-u} and~\eqref{eqn:hat-v}.

To show that~\eqref{eqn:hat-u} and~\eqref{eqn:hat-v} we start by applying the operator $\widehat{K}^*_y$ to~\eqref{eqn:assumed-decomposition} to obtain
\begin{equation}
\label{eqn:k-star-applied-to-decomp}
\widehat{K}^*_y\widehat f
=
\widehat{K}^*_y\widehat{H}_y\widehat v
+
\widehat{K}^*_y\widehat{K}_y\widehat u
\end{equation}
since $\widehat{H}^*_y\widehat g = \widehat{K}^*_y\widehat g = 0$. We compute that in components
\begin{equation}
\label{eqn:k-star-f-decomp-part-1}
\begin{split}
(\widehat{K}^*_y\widehat{H}_y\widehat v)_i
&=
\sum_{k,j}
y_j(\widehat{H}_y\widehat v)_{ijkk}=
\frac{1}{2}
\sum_{k,j}
\left(
y_iy_j^2\widehat v_{kk}
+
y_jy_k^2\widehat v_{ij}
\right)
\\
&=
\frac{\abs{y}^2}{2}
\left(
\sum_{k}
y_i\widehat v_{kk}
+
\sum_{j}
y_j\widehat v_{ij}
\right)=
\frac{\abs{y}^2}{2}
\sum_{k}
\left(
y_i\widehat v_{kk}
+
y_k\widehat v_{ik}
\right).
\end{split}
\end{equation}
For the second part of the right-hand side of~\eqref{eqn:k-star-applied-to-decomp}, we get
\begin{equation}
\begin{split}
(\widehat{K}^*_y\widehat{H}_y\widehat u)_i
&=
\sum_{k,j}
y_j(\widehat{K}_y\widehat u)_{ijkk}=
\frac{1}{4}
\sum_{k,j}
\left(
y_j(y_i\widehat u_j + y_j\widehat u_i)\delta_{kk}
+
y_j(y_k\widehat u_k+y_k\widehat u_k)\delta_{ij}
\right).
\end{split}
\end{equation}
Using the fact that $\widehat u \perp y$, the above reduces to
\begin{equation}
\label{eqn:k-star-f-decomp-part-2}
\begin{split}
(\widehat{K}^*_y\widehat{H}_y\widehat u)_i
&=
\frac14
\sum_{k,j}
y^2_j\widehat u_i\delta_{kk}=
\frac{n}{4}
\abs{y}^2
\widehat u_i.
\end{split}
\end{equation}
Combining~\eqref{eqn:k-star-f-decomp-part-1} and~\eqref{eqn:k-star-f-decomp-part-2} gives
\begin{equation}
\label{eqn:hat-k-f}
(\widehat{K}^*_y\widehat f)_i
=
\frac{n}{4}
\abs{y}^2
\widehat u_i
+
\frac{\abs{y}^2}{2}
\sum_{k}
\left(
y_i\widehat v_{kk}
+
y_k\widehat v_{ik}
\right).
\end{equation}
We will make use of this formula later, but first we will derive a formula for $\widehat{H}^*_y\widehat f$. We compute in coordinates that
\begin{equation}
\label{eqn:h-star-f-decomp-part-1}
\begin{split}
(\widehat{H}^*_y\widehat{H}_y\widehat v)_{ij}
&=
\sum_{k,l}
y_ky_l(\widehat{H}_y\widehat v)_{ijkl}
=
\frac12
\sum_{k,l}
y_ky_l
\left(
y_iy_j\widehat v_{kl}
+
y_ky_l\widehat v_{ij}
\right)
=
\frac12
\left(
y_iy_j
\sum_{k,l}
y_ky_l\widehat v_{kl}
+
\abs{y}^4
\widehat v_{ij}
\right)
.
\end{split}
\end{equation}
Similarly, we compute that
\begin{equation}
\begin{split}
(\widehat{H}^*_y\widehat{K}_y\widehat u)_{ij}
&=
\sum_{k,l}
y_ky_l
(\widehat{K}_y\widehat u)_{ijkl}
=
\frac{1}{4}
\sum_{k,l}
y_ky_l
\left(
(y_i\widehat u_j + y_j\widehat u_i)\delta_{kl}
+
(y_k\widehat u_l + y_l\widehat u_k)\delta_{ij}
\right)
\\
&=
\frac{1}{4}
\left(
\abs{y}^2
(y_j\widehat u_i + y_i\widehat u_i)
+
2
\abs{y}^2
(\widehat u \cdot y)
\delta_{ij}
\right).
\end{split}
\end{equation}
Applying the fact that $\widehat u \perp y$, reduces the above to
\begin{equation}
\label{eqn:h-star-f-decomp-part-2}
\begin{split}
(\widehat{H}^*_y\widehat{K}_y\widehat u)_{ij}
=
\frac{\abs{y}^2}{4}
\left(
y_j\widehat u_i+y_i\widehat u_j
\right).
\end{split}
\end{equation}
Since $\widehat{H}^*_y\widehat g = \widehat{K}^*_y\widehat g = 0$,  combining~\eqref{eqn:h-star-f-decomp-part-1} and~\eqref{eqn:h-star-f-decomp-part-2} gives
\begin{equation}
\label{eqn:h-star-f}
(\widehat{H}^*_y\widehat f)_{ij}
=
\frac{1}{2}
\left(
y_iy_j\sum_{k,l}
y_ky_l\widehat v_{kl}
+
\abs{y}^4
\widehat v_{ij}
\right)
+
\frac{\abs{y}^2}{4}
\left(
y_j\widehat u_i+y_i\widehat u_j
\right).
\end{equation}
We contract this equation with $y$ twice (once in both the $i$ and $j$ indices) and use the definition of $\widehat{H}^*_y$ to obtain
\begin{equation}
\label{eqn:f-contracted-al-indices}
\begin{split}
\sum_{i,j,k,l}
y_iy_jy_ky_l
\widehat f_{ijkl}
&=
\sum_{i,j}
y_iy_j(\widehat{H}^*_y\widehat f)_{ij}
\\
&=
\frac12
\left(
\sum_i
y_i^2
\right)
\left(
\sum_i
y_i^2
\right)
\left(
\sum_{k,l}
y_ky_l\widehat v_{kl}
\right)
+
\frac12\abs{y}^4
\sum_{i,j}
y_iy_j\widehat v_{ij}
\\
&\quad
+
\frac{\abs{y}^2}{2}
\left(
\left(
\sum_j y_j^2
\right)
\left(
\sum_i
y_i\widehat u_i
\right)
+
\left(
\sum_i y_i^2
\right)
\left(
\sum_k
y_j\widehat u_j
\right)
\right)
\\
&=
\abs{y}^4
\sum_{k,l}y_ky_l\widehat v_{kl}
+
\abs{y}^4(\widehat u \cdot y)=
\abs{y}^4
\sum_{k,l}y_ky_l\widehat v_{kl}.
\end{split}
\end{equation}
We now go back to~\eqref{eqn:k-star-f-decomp-part-1} and~\eqref{eqn:k-star-f-decomp-part-2} and use them to compute
\begin{equation}
\begin{split}
\sum_{i}
y_i(\widehat{K}_y^*\widehat f)_i
&=
\sum_{i}
y_i(\widehat{K}_y^*\widehat{H}_y\widehat v)_i
+
\sum_{i}
y_i(\widehat{K}_y^*\widehat{K}_y\widehat u)_i
\\
&=
\frac{\abs{y}^2}{2}
\sum_{i}
y_i
\left(
y_i
\sum_{k}
\widehat v_{kk}
+
\sum_{k}
y_k\widehat v_{ik}
\right)
+
\sum_{i}
y_i
\left(
\frac{n}{2}
\abs{y}^2\widehat u_i
\right)
\\
&=
\frac{\abs{y}^4}{2}
\sum_{k}
\widehat v_{kk}
+
\frac{\abs{y}^2}{2}
\sum_{i,k}
y_iy_k
\widehat v_{ik}
+
\frac{n}{2}\abs{y}^2(\widehat u \cdot y)
\\
&=
\frac{\abs{y}^4}{2}
\sum_{k}
\widehat v_{kk}
+
\frac{\abs{y}^2}{2}
\sum_{i,k}
y_iy_k
\widehat v_{ik}.
\end{split}
\end{equation}
From the above equation, we compute using~\eqref{eqn:f-contracted-al-indices} that
\begin{equation}
\begin{split}
\sum_k
\widehat v_{kk}
&=
\frac{2}{\abs{y}^4}
\sum_i
y_i(\widehat{K}^*_y\widehat f)_i
-
\frac{1}{2\abs{y}^2}
\sum_{i,k}
y_iy_k\widehat v_{ik}
=
\frac{1}{\abs{y}^4}
\left(
2
\sum_i
y_i(\widehat{K}^*_y\widehat f)_i
-
\frac{1}{\abs{y}^2}
\sum_{i,j,k,l}
y_iy_jy_ky_l
\widehat f_{ijkl}
\right).
\end{split}
\end{equation}
We insert this into~\eqref{eqn:hat-k-f} to get
\begin{equation}
\label{eqn:auxiliary-equation-1}
\begin{split}
\frac{n}{4}\abs{y}^2\widehat u_i
+
\frac{\abs{y}^2}{2}
\sum_k y_k\widehat v_{ik}
&=
(\widehat{K}^*_y\widehat f)_i
-
\frac{\abs{y}^2}{2}
y_i
\sum_k \widehat v_{kk}
\\
&=
(\widehat{K}^*_y\widehat f)_i
-
\frac{y_i}{2\abs{y}^2}
2\sum_{j}y_j(\widehat{K}^*_y\widehat f)_j
-
\frac{y_i}{2\abs{y}^4}
\sum_{i,j,k,l}
y_iy_jy_ky_l
\widehat f_{ijkl}
.
\end{split}
\end{equation}
Then we go back to~\eqref{eqn:h-star-f} to compute
\begin{equation}
\begin{split}
\sum_{j,k,l}y_jy_ky_l\widehat f_{ijkl}
&=
\sum_{j}
y_j(\widehat{H}^*_y\widehat f)_{ij}
\\
&=
\sum_jy_j
\left(
\frac12
\left(
y_iy_j
\sum_{k,l}
y_ky_l\widehat v_{kl}
+
\abs{y}^4
\widehat v_{ij}
\right)
+
\frac{\abs{y}^4}{4}
\left(
y_j\widehat u_i
+
y_i\widehat u_j
\right)
\right)
\\
&=
\frac{\abs{y}^2}{2}
y_i
\sum_{k,l}
y_ky_l
\widehat v_{kl}
+
\frac{\abs{y}^4}{2}
\sum_jy_j\widehat v_{ij}
+
\frac{\abs{y}^4}{4}\widehat u_i
+
\frac{\abs{y}^4}{4}y_i(\widehat u \cdot y)
\\
&=
\frac{\abs{y}^2}{2}
y_i
\sum_{k,l}
y_ky_l
\widehat v_{kl}
+
\frac{\abs{y}^4}{2}
\sum_jy_j\widehat v_{ij}
+
\frac{\abs{y}^4}{4}\widehat u_i.
\end{split}
\end{equation}
Rearranging the above equation and inserting~\eqref{eqn:f-contracted-al-indices} gives
\begin{equation}
\label{eqn:auxiliary-equation-2}
\frac{\abs{y}^4}{4}\widehat u_i
+
\frac{\abs{y}^4}{2}
\sum_jy_j\widehat v_{ij}
=
\sum_{j,k,l}
y_jy_ky_l
\widehat f_{ijkl}
-
\frac{y_i}{2\abs{y}^2}
\sum_{i,j,k,l}
y_iy_jy_ky_l
\widehat f_{ijkl}.
\end{equation}
Then multiplying~\eqref{eqn:auxiliary-equation-1} by $\abs{y}^2$ and then subtracting~\eqref{eqn:auxiliary-equation-2} gives
\begin{equation}
\begin{split}
\frac{n-1}{4}\abs{y}^4\widehat u_i
&=
\abs{y}^2
\left(
(\widehat{K}^*_y\widehat f)_i
-
\frac{y_i}{\abs{y}^2}\sum_m y_m(\widehat{K}^*_y\widehat f)_m
+
\frac{y_i}{2\abs{y}^4}
\sum_{i,j,k,l}
y_iy_jy_ky_l\widehat f_{ijkl}
\right)
\\
&\quad
-
\sum_{j,k,l}
y_jy_ky_l\widehat f_{ijkl}
+
\frac{y_i}{2\abs{y}^2}
\sum_{i,j,k,l}
y_iy_jy_ky_l\widehat f_{ijkl}
\\
&=
\abs{y}^2
\sum_j
\varepsilon_{ij}(y)
(\widehat{K}^*_y\widehat f)_j
-
\sum_k
y_k(\widehat{H}^*_y\widehat f)_{ik}
+
\frac{y_i}{\abs{y}^2}
\sum_{i,k}
y_iy_k
(\widehat{H}^*_y\widehat f)_{ik},
\end{split}
\end{equation}
where $ \varepsilon_{ij}(y)
=
\delta_{ij}
-\frac{y_iy_j}{\abs{y}^2}.$
% \begin{equation}
% \varepsilon_{ij}(y)
% \coloneqq
% \delta_{ij}
% -
% \frac{y_iy_j}{\abs{y}^2}.
% \end{equation}
This proves formula~\eqref{eqn:hat-u} since the exact form can be obtained by diving by a power of $\abs{y}$ appropriately. Next, we will find a formula for $\widehat v$ in terms of $\widehat f$ and $\widehat u$ (and thus in terms of $\widehat f$ alone by~\eqref{eqn:hat-u}). In order to find the formula for $\widehat v$ rewrite~\eqref{eqn:h-star-f} and use~\eqref{eqn:f-contracted-al-indices} to compute
\begin{equation}
\begin{split}
\widehat v_{ij}
&=
\frac{1}{\abs{y}^4}
\left(
2(\widehat{H}^*\widehat f)_{ij}
-
y_iy_j
\sum_{k,l}
y_ky_l
\widehat v_{kl}
-
\frac{\abs{y}^2}{2}(y_j\widehat u_i+y_i\widehat u_j)
\right)
\\&=
\frac{1}{\abs{y}^4}
\left(
2(\widehat{H}^*\widehat f)_{ij}
-
\frac{y_iy_j}{\abs{y}^4}
\sum_{k,l}
(\widehat{H}^*_y\widehat f)_{kl}
-
\frac{\abs{y}^2}{2}(y_j\widehat u_i+y_i\widehat u_j)
\right).
\end{split}
\end{equation}
This proves the second formula (Equation~\eqref{eqn:hat-v}) of the lemma. The exact form is obtained by distributing the powers of $\abs{y}$ above.

Finally, to find $\widehat g$ in terms of $\widehat f$ we can simply substract $\widehat{H}_y\widehat v + \widehat{K}_y\widehat u$ from $\widehat f$ since $\widehat u$ and $\widehat v$ can be written in terms of $\widehat f$. Given the desired decomposition, we have derived formulas for $\widehat v$, $\widehat u$ and $\widehat g$ in terms of $\widehat f$. This shows that such a decomposition is necessarily unique. To prove existence, we can simply define $\widehat v$, $\widehat u$ and $\widehat g$ using the formulas above. This completes the proof of our lemma.
\end{proof}
With the pointwise Fourier decomposition from Lemma~\ref{lem:E2-decomposition} at hand, we are able to prove a solenoidal decomposition of tensor fields in $\mathcal{S}(\mathbb{R}^2;E^2(2))$. A similar decompositions were shown to hold in $\mathcal{S}(\mathbb{R}^n;E^2(n))$ for $n \geq 3$ in~\cite{ilmavirta2025elasticraytransform}. The explicit form obtained for the parts of pointwise decomposition allow us to extend their result into $2$ dimensions.

\begin{theorem}
\label{th:decomposition_elastic}
Let $ f \in \mathcal{S}(\mathbb{R}^2;E^2(2))$ be an elastic $2$-tensor field satisfying $\int_{\Rb^2} f(x)\,\d x = 0$. Then there exist an elastic $2$-tensor field $g \in C^\infty(\mathbb{R}^2;E^2(2))$, a vector field $u \in C^\infty(\mathbb{R}^2;\mathbb{R}^2)$, and an elastic $1$-tensor field $v \in C^\infty(\mathbb{R}^2;E^1(2))$ such that   \begin{align}
\label{new_decomp_of_f_no_restriction_elastic}
f(x) = H v(x) + K u(x) + g(x)
\quad\text{with}\quad
K^{*} g(x) = H^{*} g(x) = 0 
\quad\text{for}\quad
x \in \Rb^2.
\end{align}	
Moreover, the $g$ and $Hv + Ku$ are uniquely determined by $f$, and $v$ and $u$ satisfy the decay estimates
\begin{equation}
\label{eqn:decay-elastic}
\abs{u_i(x)}
\leq
C(1+\abs{x})^{-1}
\quad\text{and}\quad
\abs{\partial_kv_{ij}(x)}
\leq
C(1+\abs{x})^{-1}.
\end{equation}
\end{theorem}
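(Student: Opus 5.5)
Following the approach of Theorem~\ref{th:new_decomposition}, I will work on the Fourier side. For each $y\neq 0$, Lemma~\ref{lem:E2-decomposition} (with $n=2$) applied to $\widehat f(y)$ gives unique $\widehat u(y)\perp y$, $\widehat v(y)\in E^1(2)$ and $\widehat g(y)$ with
\[
\widehat f(y)=\widehat H_y\widehat v(y)+\widehat K_y\widehat u(y)+\widehat g(y),\qquad \widehat H^*_y\widehat g=\widehat K^*_y\widehat g=0 .
\]
The explicit formulas \eqref{eqn:hat-u} and \eqref{eqn:hat-v} show the homogeneity in $y$. The quantity $\widehat K^*_y\widehat f$ is linear in $y$ and $\widehat H^*_y\widehat f$ is quadratic, so $\widehat u(y)$ is a sum of terms of the form (homogeneous of degree $-1$ in $y$) times $\widehat f(y)$. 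Likewise $\widehat v(y)$ is (degree $-2$) times $\widehat f(y)$, and $\widehat g(y)=\widehat f(y)-\widehat H_y\widehat v-\widehat K_y\widehat u$ is (degree $0$) times $\widehat f(y)$. All three are smooth on $\Rb^2\setminus\{0\}$ and rapidly decaying at infinity, since $\widehat f$ is Schwartz. The factor $\tfrac{4}{n-1}$ equals $4$ for $n=2$, so nothing degenerates in dimension two.

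\textbf{Step 1 (local integrability).} Near the origin, $\widehat u$ and $\widehat g$ are locally integrable in $\Rb^2$ regardless of any hypothesis, since $|y|^{-1}$ is integrable in two dimensions. The problem is $\widehat v$, which is of size $|y|^{-2}$; this is exactly the failure that the hypothesis must fix. The mean-zero condition $\int f=0$ gives $\widehat f(0)=0$, so Taylor's formula gives $|\widehat f(y)|\le C|y|$. Consequently $|\widehat v|\le C|y|^{-1}$ and $|\widehat u|\le C$, and in fact $|\partial^\alpha\widehat u(y)|\le C|y|^{-|\alpha|}$ and $|\partial^\alpha\widehat v(y)|\le C|y|^{-1-|\alpha|}$, exactly as in the proof of Theorem~\ref{th:new_decomposition}. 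Moreover $y^\alpha\widehat u$, $y^\alpha\widehat v$ and $y^\alpha\widehat g$ are in $L^1$ for every multi-index $\alpha$. Fourier inversion (as with \cite[Proposition 3.1]{Dorina}) then yields $u,v,g\in C^\infty(\Rb^2)$ vanishing at infinity. Taking inverse transforms of the pointwise identities, using $\widehat{(Hv)}=\widehat H_y\widehat v$ and the analogous identities for $K$, $H^*$, $K^*$ (up to constant powers of $\I$ absorbed into the definitions), gives \eqref{new_decomp_of_f_no_restriction_elastic}.

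\textbf{Step 2 (decay).} The Fourier transforms of $u_i$ and $\partial_k v_{ij}$ are $\widehat u_i$ and $\I y_k\widehat v_{ij}$. Both are bounded near $0$, and each $y$-derivative costs a factor $|y|^{-1}$. To obtain decay $|x|^{-1}$, I will write $x_m u_i(x)$ as the inverse transform of $\I\partial_{y_m}\widehat u_i$, obtained by integrating by parts once. The integrand is $O(|y|^{-1})$ near $0$, hence $L^1$ in $\Rb^2$. The boundary contribution on a small circle $|y|=\epsilon$ is $O(\epsilon)$ because $\widehat u$ is bounded, so it vanishes as $\epsilon\to0$. Hence $|x||u(x)|\le C$, and the same argument applies to $\partial_k v_{ij}$. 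This is the step I expect to be most delicate. Only one integration by parts is allowed, since a second derivative produces a non-integrable $|y|^{-2}$ singularity, so I will carefully justify the boundary term via the cut-off at $|y|=\epsilon$ rather than claim more decay.

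\textbf{Step 3 (uniqueness).} Suppose $f=0$ and $Hv+Ku+g=0$ with $H^*g=K^*g=0$, where $g$ and $v,u$ are tempered, with the decay properties above. Take Fourier transforms. The component of $\widehat u$ parallel to $y$ can be absorbed, because $\widehat K_y$ applied to a vector parallel to $y$ lies in $B_y$. The uniqueness in Lemma~\ref{lem:E2-decomposition} then gives that $\widehat g$ and $\widehat H_y\widehat v+\widehat K_y\widehat u$ vanish on $\Rb^2\setminus\{0\}$, so they are supported at the origin. They are therefore finite sums of derivatives of $\delta_0$, which means $g$ and $Hv+Ku$ are polynomials. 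Both tend to zero at infinity (by the decay of $g$ and of the derivatives of $u,v$), so they vanish. This is why uniqueness is stated only for $g$ and $Hv+Ku$: the kernel of $(u,v)\mapsto Hv+Ku$ is nontrivial.
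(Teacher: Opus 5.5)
Your proposal is correct and follows essentially the same route as the paper: the pointwise decomposition of $\widehat f(y)$ from Lemma~\ref{lem:E2-decomposition}, the homogeneity of the resulting multipliers, and the condition $\widehat f(0)=0$ to make $\widehat v$ integrable near the origin, followed by Fourier inversion, one integration by parts for the decay estimates, and uniqueness inherited from the pointwise lemma via the support-at-the-origin argument of Theorem~\ref{th:new_decomposition}. You also make explicit a point the paper leaves implicit: the mean-zero condition improves $\widehat u$ to $O(1)$ near $0$, and this is what justifies the single integration by parts giving $\abs{u(x)}\le C(1+\abs{x})^{-1}$.
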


\begin{proof}
For any $y \in \mathbb{R}^2 \setminus \{0\}$ the Fourier transform of $f$ can be decomposed as
\begin{equation}
\widehat f(y)
=
\widehat{H}_y\widehat v(y)
+
\widehat{K}_y\widehat u(y)
+
\widehat g(y),
\quad\text{where}\quad
\widehat{H}_y^*\widehat g(y)
=
\widehat{K}_y^*\widehat g(y)
=
0
\quad\text{and}\quad
\widehat u(y) \perp y
\end{equation}
according to Lemma~\ref{lem:E2-decomposition}. The components $\widehat v_{ij}(y)$ and $\widehat u_i(y)$ are given in equations~\eqref{eqn:hat-u} and~\eqref{eqn:hat-v}. From these equations we observe that $\widehat v_{ij}(y)$ is homogeneous of degree $-2$ in $y$ and $\widehat u_i(y)$ is homogeneous of degree $-1$ in $y$ and the components are smooth outside the origin in $\mathbb{R}^2$. Moreover, the components satisfy the estimates
\begin{equation}
\label{eqn:integrability-estimate}
\abs{\widehat v_{ij}(y)}
\leq
C\abs{y}^{-2}
\quad\text{and}\quad
\abs{\widehat u_{i}(y)}
\leq
C\abs{y}^{-1}
\quad\text{for}\quad
\abs{y} \leq 1.
\end{equation}
Additionally, equations~\eqref{eqn:hat-u} and~\eqref{eqn:hat-v} imply that $\widehat v_{ij}(y)$ and $\widehat u_i(y)$ are rapidly decaying in $y$ since $\widehat f_{ijkl}(y)$ is Schwartz. For $\widehat u_i(y)$ estimate~\eqref{eqn:integrability-estimate} and rapid decay imply that $u$ is integrable in $\mathbb{R}^2$. To see that $\widehat v_{ij}(y)$ is also integrable we need to use the condition that $\int_{\mathbb{R}^2} f(x)\,\d x = \widehat f(0) = 0$ to get an improved estimate. We use Taylor series expansion at $y = 0$ to compute that
\begin{equation}
(\widehat{H}^*_y\widehat f)_{ij}(y)
=
\sum_{k,l}
y_ky_l\widehat f_{ijkl}(0)
+
\sum_{k,l}
y_ky_l\widehat R_{ijkl}(y), \quad \mbox{and}\quad(\widehat{K}^*_y\widehat f)_{i}(y)
=
\sum_{j,k}
y_j\widehat f_{ijkk}(0)
+
\sum_{j,k}
y_j\widehat Q_{ijkk}(y),
\end{equation}
% and
% \begin{equation}
% (\widehat{K}^*_y\widehat f)_{i}(y)
% =
% \sum_{j,k}
% y_j\widehat f_{ijkk}(0)
% +
% \sum_{j,k}
% y_j\widehat Q_{ijkk}(y)
% \end{equation}
where the error terms $\widehat R_{ijkl}(y)$ and $\widehat Q_{ijkl}(y)$ are of the order $\mathcal{O}(y)$ as $y \to 0$. Inserting the above together with $\widehat f_{ijkl}(0)$ into equation~\eqref{eqn:hat-v} and tracing throught the notation we find that
\begin{equation}
\abs{\widehat v_{ij}(y)}
\leq
C\abs{y}^{-1}
\quad\text{for}\quad
\abs{y} \leq 1.
\end{equation}
Similar to $\widehat u_i$, this estimate implies that $\widehat v_{ij}(y)$ is integrable in $\mathbb{R}^2$.

We have shown that $\widehat v_{ij}(y)$ and $\widehat u_i(y)$ are integrable in $\mathbb{R}^2$ and rapidly decaying. It directly follows that the inverse Fourier transforms of these functions are smooth in $\mathbb{R}^2$ up to the origin. This is a standard argument in Fourier analysis using differentiation under the integral sign. Thus we have found a smooth vector field $u \in C^\infty(\mathbb{R}^2;\mathbb{R}^2)$ and a smooth elastic $1$-tensor field $v \in C^\infty(\mathbb{R}^2;E^1(2))$ required for the decomposition. The decay estimates for $u$ and $v$ in equation~\eqref{eqn:decay-elastic} can be shown by integration by parts using homogeneity of the respective Fourier transforms (cf. Proof of theorem~\ref{th:new_decomposition})

Once we have found the required $u$ and $v$ for the decomposition, the same line of reasoning can be used to prove the smoothness of
\begin{equation}
g(x) = f(x) - Hu(x) - Ku(x).
\end{equation}
Uniqueness of the decomposition follows from uniqueness of the decomposition in Lemma~\ref{lem:E2-decomposition}.
\end{proof}

% \begin{theorem}
% \label{th:decomposition_elastic}
% Let $ f \in \mathcal{S}(E^2(2))$ be an elastic $2$-tensor field satisfying $\int_{\Rb^2} f(x)\,\d x = 0$. Then there exist an elastic $2$-tensor field $g \in C^\infty(\mathbb{R}^2;E^2(2))$, a vector field $u \in C^\infty(\mathbb{R}^2;\mathbb{R}^2)$, and an elastic $1$-tensor field $v \in C^\infty(\mathbb{R}^2;E^1(2))$ such that   \begin{align}
% \label{new_decomp_of_f_no_restriction_elastic}
% f(x) = H v(x) + K u(x) + g(x)
% \quad\text{with}\quad
% K^{*} g(x) = H^{*} g(x) = 0 
% \quad\text{for}\quad
% x \in \Rb^2.
% \end{align}	
% Moreover, the $g$ and $Hv + Ku$ are uniquely determined by $f$, and $v$ and $u$ satisfy the decay estimates
% \begin{equation}
% \label{eqn:decay-elastic}
% \abs{u_i(x)}
% \leq
% C(1+\abs{x})^{-1}
% \quad\text{and}\quad
% \abs{\partial_kv_{ij}(x)}
% \leq
% C(1+\abs{x})^{-1}.
% \end{equation}
% \end{theorem}

\section{Applications to  momentum ray transforms}\label{sec:Application to moments} 
% \ssc{Maybe we could try to write it down for symmetric $m$ tensors lifting the assumption $1\le k < \min{m,n-1}$.}

In this section, our aim is to prove an injectivity result for momentum ray transforms. The result we prove here improves the earlier result~\cite[Theorem 4.3]{Rohit_Suman}, which had a dimensional restriction. Because of the restriction, the result~\cite[Theorem 4.3]{Rohit_Suman} was not true for $m=2$ and $n=2$. 

\begin{definition}\label{def:integral moments}
For a non-negative integer $q \geq 0$, the $q$-th integral moment transform of a symmetric $2$-tensor field is the  operator $I^q :{\Sc}(\mathbb{R}^2;S^2)\rightarrow{\Sc}(T\Sb^1)$ given by~\cite{Sharafutdinov_1986_momentum}:
\begin{align}\label{eq:def momentum ray}
 I^q f(x,\xi)=  \sum_{i,j=1}^2\int\limits_{-\infty}^\infty t^q f_{ij}(x+t\xi)\,\xi_{i}\xi_{j} dt.
\end{align}
\end{definition}
\noindent We also define the operator $J^q: \Sc(\mathbb{R}^2;S^2) \longrightarrow C^\infty(\Rb^2 \times (\Rb^{2} \setminus \{0\}))$ by extending $I^q$ to  $ \Rb^2 \times \Rb^2\setminus\{0\} $ 
\begin{equation}\label{eq:definition of Jk}
J^q f(x,\xi)= \sum_{i,j=1}^2\int\limits_{-\infty}^\infty t^qf_{ij}(x+t\xi)\,\xi_{i}\xi_{j}  \, d t \quad\mbox{for}\quad(x,\xi)\in \mathbb{R}^{2} \times \mathbb{R}^{2}\setminus \{0\}.
\end{equation} 
It has been shown in~\cite[Equation 2.6]{KMSS} that the data $(I^0 f, I^1 f, I^2 f)$ and $(J^0 f, J^1 f, J^2 f)$ are equivalent for and there are explicit relations between these operators:
\begin{align*}
    	J^0\!f(x,\xi)&=|\xi| I^0\!f
	\left(x-\frac{\langle x, \xi \rangle}{|\xi|^2}\xi,\frac{\xi}{|\xi|}\right)\\
J^1\!f(x,\xi)&=-\frac{\langle\xi,x\rangle}{|\xi|}\,I^0\!f \left(x-\frac{\langle x, \xi \rangle}{|\xi|^2}\xi,\frac{\xi}{|\xi|}\right) + I^1\!f
	\left(x-\frac{\langle x, \xi \rangle}{|\xi|^2}\xi,\frac{\xi}{|\xi|}\right)\\
J^2\!f(x,\xi)&=\frac{\langle\xi,x\rangle^{2}}{|\xi|^3}\,I^0\!f \left(x-\frac{\langle x, \xi \rangle}{|\xi|^2}\xi,\frac{\xi}{|\xi|}\right) - 2\frac{\langle\xi,x\rangle}{|\xi|^2}\,I^1\!f
	\left(x-\frac{\langle x, \xi \rangle}{|\xi|^2}\xi,\frac{\xi}{|\xi|}\right)\\
    &\qquad + |\xi|^{-1}\,I^2\!f
	\left(x-\frac{\langle x, \xi \rangle}{|\xi|^2}\xi,\frac{\xi}{|\xi|}\right).
\end{align*}
% \tred{The entire para will go to the preliminary section. The Fourier transform of a symmetric $2$-tensor field $f \in \Sc(S^2)$ is defined component-wise, that is, 
% \begin{align*}
%  \widehat{f}_{ij}(y)  = \widehat{f}_{ij}(y), \quad y \in \Rb^2 ,
% \end{align*}
% where $\widehat{h}(y)$ denotes the usual Fourier transform of a scalar function $h$ defined on $\Rb^2$.} \vspace{2mm}
% \noindent The Fourier transform  $\Fc :\Sc(T\Sb^1) \longrightarrow \Sc(T\Sb^1)$ is defined as follows, see \cite[Section 2.1]{Sharafutdinov_book}:
% \begin{align}\label{eq:Fourier transform on sphere bundle}
%     \Fc (\vf) (y, \xi) =    \widehat{\vf}(y, \xi) = \frac{1}{(2 \pi)^{1/2}}\int_{\xi^\perp} e^{-i x\cdot y} \vf(x, \xi)\, dx,
% \end{align}
% where $dx$ is the arc-length measure on the line $\xi^\perp = \{ x\in \Rb^2 : \l x,\xi \r = 0\}.$ \vspace{2mm} 
This definition of the Fourier transform is used to compute the following Fourier transform of $q$-th integral moment transform of $f$:
\begin{align*}
    \widehat{I^q f}(y, \xi) = (2 \pi)^{1/2} i^q \langle \xi, \partial_y\rangle^q \left(  \sum_{i,j=1}^2\widehat{f}_{ij}(y) \xi_i\xi_j\right).
\end{align*}
For $q=0$, the above equality reduces to
\begin{align*}
    \widehat{I f}(y, \xi) = (2 \pi)^{1/2}  \sum_{i,j=1}^2 \widehat{f}_{ij}(y) \xi_i\xi_j.
\end{align*}

\begin{theorem}\label{th:injectivity result}
Let $ f \in \mathcal{S}(\mathbb{R}^2;S^2)$ be a symmetric $2$-tensor field then $I^q f = 0$ for  $ q = 0, 1$ if and only if $f = \D^2 v$, for some smooth function $v$  satisfying $v(x) , \partial_{x_i} v \rightarrow 0$ as $|x| \rightarrow  \infty$, for $i=1,2$.
\end{theorem}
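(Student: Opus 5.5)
The proof has two directions. The forward direction is a one-variable integration by parts along each line. The converse reduces, via Theorem~\ref{th:new_decomposition}, to showing that the $2$-solenoidal part vanishes, by a pointwise linear-algebra argument in the Fourier domain.

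\emph{($\Leftarrow$)} Suppose $f=\D^2 v$ with $v,\partial_{x_i}v\to0$ at infinity. Fix a line $x+t\xi$ and put $\phi(t)=v(x+t\xi)$, so that $\sum_{i,j}f_{ij}(x+t\xi)\xi_i\xi_j=\phi''(t)$.
\begin{itemize}
\item For $q=0$: $I^0f(x,\xi)=\lim_{T\to\infty}\bigl(\phi'(T)-\phi'(-T)\bigr)=0$.
\item For $q=1$: on $[-T,T]$ we have
\begin{align*}
\int_{-T}^{T}t\phi''\,dt=T\phi'(T)+T\phi'(-T)-\bigl(\phi(T)-\phi(-T)\bigr).
\end{align*}
The last bracket tends to $0$. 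For the boundary terms, $\phi''$ is Schwartz in $t$ because $f$ is Schwartz, and $\phi'(\pm\infty)=0$. Hence $\phi'(T)=-\int_T^\infty\phi''\,dt=O(T^{-N})$ for every $N$, and likewise at $-T$, so $T\phi'(\pm T)\to0$ and $I^1f=0$.
\end{itemize}

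\emph{($\Rightarrow$)} Assume $I^0f=I^1f=0$. Set $h(y,\xi)=\sum_{i,j}\widehat f_{ij}(y)\xi_i\xi_j$. By the Fourier formulas for $\widehat{I^qf}$ stated before the theorem, $h(y,\xi)=0$ and $\langle\xi,\partial_y\rangle h(y,\xi)=0$ for all $y\perp\xi$, $|\xi|=1$.

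\emph{Step 1 (mean zero).} Taking $y=0$, which is orthogonal to every $\xi$, gives $\widehat f(0)\xi\xi=0$ for all unit $\xi$. By symmetry of $\widehat f(0)$ this forces $\widehat f(0)=0$, i.e.\ $\int_{\Rt}f=0$.

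\emph{Step 2 (decompose).} Theorem~\ref{th:new_decomposition} now applies and gives $f=g+\D^2v$, with $\delta^2g=0$, $|g|\lesssim(1+|x|)^{-2}$, and $v,\D v$ decaying. This $v$ has the required properties, so it remains to prove $g=0$.

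\emph{Step 3 ($g=0$).} I will work with $\widehat g=\widehat f+i_y^2\widehat v$, which is smooth on $\Rt\setminus\{0\}$ by the proof of Theorem~\ref{th:new_decomposition}, rather than with $I^qg$, since $g$ is not Schwartz.
\begin{itemize}
\item The extra term contributes $(y\cdot\xi)^2\widehat v(y)$ to $\widehat g(y)\xi\xi$. This term and its $\langle\xi,\partial_y\rangle$-derivative vanish on $y\perp\xi$ (away from $0$), so both conditions on $h$ transfer to $\tilde h(y,\xi)=\widehat g(y)\xi\xi$.
\item Fix $y\neq0$ and write $y^\perp$ for its rotation by $\pi/2$. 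The condition $j_y^2\widehat g(y)=0$ kills the $y\otimes y$ component. The condition $\widehat g(y)\xi\xi=0$ with $\xi=y^\perp/|y|$ kills the $y^\perp\otimes y^\perp$ component. Hence $\widehat g(z)=a(z)\bigl(z\otimes z^\perp+z^\perp\otimes z\bigr)$ near $y$, and $\tilde h(z,\xi)=2a(z)(z\cdot\xi)(z^\perp\cdot\xi)$.
\item Differentiate in $z$ along $\xi$ at $z=y\perp\xi$. The factor $z\cdot\xi$ vanishes there, and its derivative is $|\xi|^2=1$. The factor $z^\perp\cdot\xi$ equals $\pm|y|$ there. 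So the derivative is $\pm2a(y)|y|$, and its vanishing gives $a(y)=0$.
\item Thus $\widehat g$ is supported in $\{0\}$, so $g$ is a polynomial. Since $g\to0$ at infinity, $g=0$, exactly as in the uniqueness part of Theorem~\ref{th:new_decomposition}.
\end{itemize}

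The step I expect to be most delicate is Step 3: justifying the transfer of the two Fourier conditions from $\widehat f$ to $\widehat g$ pointwise away from the origin. The argument hinges on the $\D^2 v$ contribution being $(y\cdot\xi)^2\widehat v(y)$, which vanishes to second order on $y\perp\xi$, together with the smoothness of $\widehat v$ on $\Rt\setminus\{0\}$.
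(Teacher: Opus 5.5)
Your proof is correct. Its skeleton matches the paper's: mean zero, then Theorem~\ref{th:new_decomposition}, then $\widehat g(y)=0$ for $y\neq0$ by pointwise linear algebra, then the support-at-the-origin argument. Where you differ genuinely is in how you use the $I^1$ data.

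The paper invokes the identity $\partial_{\xi_j}J^0f-\partial_{x_j}J^1f=2J^0\tilde f^j$ from \cite[Lemma 4.2]{Rohit_Suman}. From it, the paper deduces that the ray transforms of the rows $\tilde f^i=(f_{i1},f_{i2})$ vanish, i.e.\ $\widehat f(y)\xi=0$ as a vector whenever $y\perp\xi$. This transfers to $\widehat g$ immediately, since $\sum_j y_iy_j\xi_j\widehat v(y)=y_i(y\cdot\xi)\widehat v(y)=0$. The three conditions $\langle\widehat g,\xi\otimes\xi\rangle=\langle\widehat g,y\otimes\xi\rangle=\langle\widehat g,y\otimes y\rangle=0$ then finish the argument.

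You instead keep only the scalar slice data $h$ and $\langle\xi,\partial_y\rangle h$ on $y\perp\xi$. The transfer to $\widehat g$ therefore needs the second-order vanishing of $(y\cdot\xi)^2\widehat v$ and the smoothness of $\widehat v$ off the origin. After that, the parametrization $\widehat g=a\,(z\otimes z^\perp+z^\perp\otimes z)$ lets the derivative condition kill $a$.

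Your route is self-contained: it relies only on the formula for $\widehat{I^qf}$ stated before the theorem, not on the $J^q$ extension and an external lemma. The paper's route obtains a stronger intermediate fact (vanishing of the vector ray transforms of the rows) and needs no differentiation of $\widehat v$.

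Two smaller points are in your favour. Reading off $\widehat f(0)=0$ from the slice formula at $y=0$ is cleaner than integrating $I^0f$ along three families of lines. And in the ``if'' direction, you control the boundary terms $T\phi'(\pm T)$ via the rapid decay of $f$ along lines. The paper's one-line integration by parts leaves that step implicit, and it is needed because only $\partial_i v\to0$ is assumed.
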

\begin{proof}
To prove the `if' part of the theorem, we start by assuming $ f=\d^{2} v$ for some $ v \in C^{\infty}(\Rb^2)$ satisfying   $\d^{\ell}v \rightarrow 0 $ as $ |x| \rightarrow \infty$ for $ \ell=0,1$. Then a simple application of integration by parts  gives
    $$ I^{\ell}(f)= (-1)^{\ell}\,I^0(\d^{2-\ell}v)=0, \qquad \mbox{ for } \ell = 0 , 1.$$ 
Conversely, let $ f\in \mathcal{S}(\mathbb{R}^2;S^{2}) $ be symmetric $2$-tensor field that satisfies $ I^{q}f =0$ for $q=0,1$. Since, $I^0f (x, \xi) = 0$ for $(x, \xi) \in T\Sb^1$. We will use this information to conclude $\int_{\Rb^2} f(x) dx = 0$, so that we can apply our Decomposition Theorem~\ref{th:new_decomposition}. 
\vspace{2mm}\\
Let us fix $\xi = e_1 := (1, 0)$ then for all $x_2 \in \Rb$, we have 
\begin{align*}
    I^0f ((0,x_2), e_1) = 0 \Longrightarrow  \int_\Rb f_{11}(t, x_2)dt = 0 \Longrightarrow  \int_\Rb\int_\Rb f_{11}(t, x_2)dtdx_2 = 0 \Longrightarrow  \int_{\Rb^2}f_{11}(x)dx = 0. 
\end{align*}
Repeating the same process as above by choosing $\xi$ to be unit vectors $(0, 1)$ and $\frac{(1, 1)}{\sqrt{2}}$, we can show $ \int_{\Rb^2}f_{22}(x)dx = 0$  and $  \int_{\Rb^2}f_{12}(x)dx = 0.$
%$$\int_{\Rb^2}f_{22}(x)dx = 0 \qquad \& \qquad  \int_{\Rb^2}f_{12}(x)dx = 0.$$
Therefore, we have $\int_{\Rb^2}f(x)dx = 0$ and hence from our Decomposition Theorem~\ref{th:new_decomposition}, $f$ can be decomposed as $ f=g+\d^{2}v,\quad \delta^{2} g=0 \quad \mbox{and}\quad  v\, \rightarrow 0 \quad \mbox{as} \quad |x| \rightarrow \infty.$
	%\[f=g+\d^{2}v,\quad \delta^{2} g=0 \quad \mbox{and}\quad  v\, \rightarrow 0 \quad \mbox{as} \quad |x| \rightarrow \infty.\]
Using~\cite[Lemma 4.2]{Rohit_Suman}, we have  $J^0 \tilde{f}^i (x, \xi) = 0$ for $ i = 1, 2$ and $\tilde{f}^i = (f_{i1}, f_{i2}),$ since $J^p f(x,\xi) =0$ for $p=0,1$.   
This implies  $ J^0 \tilde{f}^i (x, \xi)|_{T\mathbb{S}^1}= I^0 \tilde{f}^i (x, \xi) = 0, \mbox{ for } i = 1, 2.$
Then, taking the Fourier transform, we get for $y \perp \xi$
\begin{align*}
     \sum_{j=1}^2\widehat{(\tilde{f}^i)}_{j}(y)\xi_j &= 0 \Longrightarrow   \sum_{j=1}^2\widehat{f}_{ij}(y) \xi_j = 0, \quad \mbox{ for } i = 1, 2. 
\end{align*}
This along with decomposition result stated in Theorem~\ref{th:new_decomposition}, we deduce (for $i=1, 2$) that 
\begin{align*}
     \widehat{g}(y) + i_y^2 \widehat{v}(y)  = \widehat{f}(y)
  \implies   \sum_{j=1}^2\xi_j\widehat{g}_{ij}(y) +  \sum_{j=1}^2\xi_j y_iy_j \widehat{v}(y)  =   \sum_{j=1}^2\xi_j \widehat{f}_{ij}(y) \implies  \sum_{j=1}^2\xi_j\widehat{g}_{ij}(y)  = 0. %\  \mbox{ for } i = 1, 2.
\end{align*}
From above, we can obtain two independent relations $\langle 
\widehat{g}, \xi\otimes \xi\rangle = 0$ and $\langle 
\widehat{g}, y \otimes \xi\rangle = 0$. Furthermore $\delta^2 g = 0$ implies  $\langle 
\widehat{g}, y\otimes y \rangle = 0$. Combining all these three conditions, we have $\widehat{g}(y)= 0$ for $y \neq 0$. Since $ \widehat{g}(y) $ is a locally integrable function, therefore as a distribution the support of $ \widehat{g}\subseteq \{0\}$. Amending the arguments used in the proof of the uniqueness part of the Theorem~\ref{th:new_decomposition}, we get $g=0 $ in $ \Rb^2$. Putting $g=0$ in the decomposition above, we achieve $f =\d^{2}v$, which completes the proof of the converse part as well.
\end{proof}

\section{Applications to Elastic ray transform}
\label{sec:Application to elastic ray}

We start with recalling the definition (see~\cite{ilmavirta2025elasticraytransform}) of the \emph{elastic ray transform} acting on the Schwartz class of elastic tensor fields in $\mathcal{S}(\mathbb{R}^n;{E^m(n))}$. For a given direction vector $\xi \in \mathbb{S}^{n-1}$, we denote the hyperplane passing through the origin and perpendicular to $\xi$ by $\xi^\perp$. For a given unit vector $\xi$, the vectors $ \Rb \xi \cup \xi^{\perp} \subset \Rb^n$ are called polarization vectors in $\mathbb{R}^n$. We define
%$
\begin{equation*}
\mathcal{Z}
=
\{(\xi, \zeta): \xi\in \Sb^{n-1} \mbox{ and } \zeta \in \Rb \xi \cup \xi^{\perp}\}
\end{equation*}
%$
to be the set of pairs of unit vectors and corresponding polarization vectors.

\begin{definition}
\label{def: new elastic ray transform}
For $m \in \mathbb{N}$ the elastic ray transform  is the map $X^m \colon \mathcal{S}(\mathbb{R}^n;E^m(n)) \to C^\infty(\mathbb{R}^n \times \mathcal{Z})$ acting on $f \in \mathcal{S}(\mathbb{R}^n;E^m(n))$ by
\begin{equation}
X^mf(x,\xi,\zeta)
=
\int_\mathbb{R}
\inner{f(x+t\xi),(\xi \otimes \zeta)^{\otimes m}}
\,dt
\end{equation}
where $(x,\xi,\zeta) \in \mathbb{R}^n \times \mathcal{Z}$.
\end{definition}
\noindent The aim of this section is to prove injectivity of the elastic ray transform of elastic $2$-tensor fields in $\mathbb{R}^2$ up to the natural obstructions. That is we are going to study the injectivity of the operator $X^2$ on $\mathcal{S}(\mathbb{R}^2;E^2(2))$. The obstruction to uniqueness for $X^2$ come in a form of solenoidal tensor fields adapted to the framework of elastic tensor fields. We will begin by showing that elastic tensor fields in $\Rb^2$ admit specific solenoidal decompositions in . To this end, let us recall certain differential operators from~\cite{ilmavirta2025elasticraytransform}.

With this decomposition result at hand, we state and prove the main result of this section, which is about the kernel description of $X^2$. In dimensions $n\ge 3$, this result was proved in~\cite{ilmavirta2025elasticraytransform}; however, the exact kernel description for $n=2$ was not proved in~\cite{ilmavirta2025elasticraytransform} because their proof of the generalized trace-free solenoidal potential decomposition is not applicable in that case.
It is, however, possible to prove the decomposition result under an additional mean zero condition $\int_{\Rb^2} f_{ijk\ell}(x) \,\d x=0$ when $f\in \sch(\mathbb{R}^2;E^2(2))$ as we show.

\begin{theorem}
\label{thm:elastic-injectivity}
For $f \in \mathcal{S}(\mathbb{R}^2;E^2(2))$ the following are equivalent:
\begin{enumerate}[label=(\alph*)]
    \item  \label{item:vanishing-elastic-ray-transform} The elastic ray transform of $f$ vanishes.
    \item \label{item:elastic-potential-form} There are $v \in C^\infty(\mathbb{R}^2;E^1(2))$ and $u \in C^\infty(\mathbb{R}^2;\mathbb{R}^2)$ so that $u$ and $v$ and all their partial derivatives tend to $0$ as $\abs{x} \to 0$, so that $f = Hv + Ku$.
\end{enumerate}
\end{theorem}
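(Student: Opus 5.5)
We prove the two implications separately, mirroring the argument of Theorem~\ref{th:injectivity result}: the mean-zero condition needed for Theorem~\ref{th:decomposition_elastic} is extracted from the vanishing of $X^2f$, and the solenoidal part is then killed on the Fourier side.

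\emph{(b) implies (a).} Fix $(x,\xi,\zeta)\in\Rb^2\times\mathcal{Z}$ and contract $Hv$ and $Ku$ with $(\xi\otimes\zeta)^{\otimes 2}$. Since $(\xi\otimes\zeta)^{\otimes2}$ is invariant under the relevant symmetrizations, we get
\[
\inner{Hv,(\xi\otimes\zeta)^{\otimes 2}} = \partial_\xi\partial_\zeta\inner{v,\xi\otimes\zeta}.
\]
Writing $\partial_\zeta=\inner{\zeta,\xi}\partial_\xi+\partial_{\zeta^\perp}$ is not needed: we integrate in $t$ along the line first. The $\partial_\xi$ derivative is a total $t$-derivative, so the line integral of $\partial_\xi(\partial_\zeta\inner{v,\xi\otimes\zeta})(x+t\xi)$ vanishes by the decay of the first derivatives of $v$. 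Similarly,
\[
\inner{Ku,(\xi\otimes\zeta)^{\otimes 2}}=\tfrac12\,\partial_\xi\inner{u,\zeta}\,\inner{\zeta,\zeta}\ \text{(up to a symmetric term)},
\]
which is again of the form $\partial_\xi(\cdot)$ or $\inner{\xi,\zeta}\,\partial_\zeta(\cdot)$. For $\zeta\perp\xi$ this is $\frac12|\zeta|^2\partial_\xi\inner{u,\xi}$ plus a multiple of $\inner{\xi,\zeta}$, hence a total derivative. For $\zeta\parallel\xi$ it is again a $\partial_\xi$ derivative. In both cases integration in $t$ plus decay gives zero.

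\emph{(a) implies (b).} There are three steps.

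\textbf{Mean-zero condition.} For fixed $(\xi,\zeta)$, integrate $X^2f(x,\xi,\zeta)$ over $x\in\xi^\perp$ to get $\inner{\widehat f(0),(\xi\otimes\zeta)^{\otimes2}}=0$ for all $(\xi,\zeta)\in\mathcal{Z}$. By polarization, as $\xi$ varies and $\zeta\in\{\xi,\xi^\perp\}$, these contractions determine $\widehat f(0)$ modulo the kernel of the map $w\mapsto(\inner{w,(\xi\otimes\zeta)^{\otimes 2}})$. That kernel is exactly $\{\widehat H_0\text{-type}\}$ directions. This step needs care: in general one only gets $\widehat f(0)$ in a subspace, so I would instead apply the argument to the Fourier side directly. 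By the Fourier slice theorem, $X^2f=0$ gives
\[
\inner{\widehat f(y),(\xi\otimes\zeta)^{\otimes2}}=0 \qquad\text{for all } y\perp\xi,
\]
for both $\zeta=\xi$ and $\zeta=\xi^\perp$ (and by linearity in $\zeta\in\Rb\xi$). In $\Rb^2$, with $\xi=y^\perp/|y|$, this gives linear constraints on $\widehat f(y)$. Continuity at $y=0$ over all directions then yields enough constraints on $\widehat f(0)$. If full mean-zero does not follow, I would subtract an explicit $Hv_0+Ku_0$ (with $v_0,u_0$ built from $\log|x|$ and homogeneous terms, as in Step 2 of Theorem~\ref{th:new_decomposition}) to reduce to the mean-zero case.

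\textbf{Applying the decomposition.} Theorem~\ref{th:decomposition_elastic} gives $f=Hv+Ku+g$ with $H^*g=K^*g=0$, and by the first implication $X^2g=0$. Hence for each $y\neq0$, $\widehat g(y)\in C_y$ is annihilated by $(\xi\otimes\xi)^{\otimes2}$, $(\xi\otimes\xi^\perp)^{\otimes 2}$ and $(\xi\otimes y)^{\otimes2}$-type contractions, where $\xi=y^\perp/|y|$.

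\textbf{Killing $g$: the main obstacle.} This is the linear-algebra step, showing that these contractions together with the conditions defining $C_y$ force $\widehat g(y)=0$. In $n=2$ this is a finite computation. Take coordinates $y=e_1$, $\xi=e_2$, and enumerate the components of $E^2(2)$; there are 6 independent ones, indexed by symmetric pairs of pairs from $\{11,12,22\}$. The constraints are $\widehat H^*_y\widehat g=0$ (3 equations: $\widehat g_{ij11}=0$), $\widehat K^*_y\widehat g=0$ (2 equations), and the two transform constraints $\widehat g_{2222}=0$ and $\widehat g_{2121}=0$, i.e.\ $\widehat g_{1212}=0$. Counting gives enough equations, and I would verify the rank explicitly. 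Then $\operatorname{supp}\widehat g\subseteq\{0\}$, so $g$ is a polynomial, and the decay of $g$ forces $g=0$. This gives $f=Hv+Ku$, and the required decay of $u$, $v$ and their derivatives follows from Theorem~\ref{th:decomposition_elastic}.
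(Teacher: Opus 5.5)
Your overall plan matches the paper's: (b)$\Rightarrow$(a) by total $t$-derivatives, then the mean-zero condition, Theorem~\ref{th:decomposition_elastic}, and elimination of the solenoidal part. However, you leave the mean-zero step open, and that is a genuine gap.

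\textbf{The mean-zero step.} You correctly reduce it to $\inner{\widehat f(0),(\xi\otimes\zeta)^{\otimes 2}}=0$ for all $(\xi,\zeta)\in\mathcal Z$. This follows by integrating $X^2f$ over $\xi^\perp$, or equivalently by setting $y=0$ in the slice identity, where every $\xi$ is admissible. You then guess a nontrivial kernel ``of $\widehat H_0$-type'' (note $\widehat H_0=0$) and stop. In fact the kernel is trivial. For $w=\widehat f(0)\in E^2(2)$,
\[
\inner{w,\xi^{\otimes 4}}=w_{1111}\xi_1^4+4w_{1112}\xi_1^3\xi_2+(2w_{1122}+4w_{1212})\xi_1^2\xi_2^2+4w_{1222}\xi_1\xi_2^3+w_{2222}\xi_2^4 .
\]
Taking $\zeta=\xi$ therefore forces $w_{1111}=w_{1112}=w_{1222}=w_{2222}=0$ and $w_{1122}=-2w_{1212}$. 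Taking $(\xi,\zeta)=(e_1,e_2)$ gives $w_{1212}=0$, hence $w=0$. The paper simply cites \cite[Lemma 7]{ilmavirta2025elasticraytransform} at this point.

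Your fallback of subtracting an explicit $Hv_0+Ku_0$ built from $\log\abs{x}$ is not a valid substitute. Such fields do not decay (cf.\ Step~2 of the proof of Theorem~\ref{th:new_decomposition}), so the resulting $u,v$ could not have the decay required in (b). The mean-zero property must be extracted from (a); it cannot be bypassed.

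\textbf{Eliminating $g$.} This part is correct, and your explicit computation is a useful self-contained replacement for the paper's appeal to \cite{ilmavirta2025elasticraytransform}. At $y=e_1$, $\xi=e_2$:
\begin{enumerate}
\item $\widehat H^*_y\widehat g=0$ gives $\widehat g_{1111}=\widehat g_{1112}=\widehat g_{1122}=0$.
\item $\widehat K^*_y\widehat g=0$ then adds only $\widehat g_{1222}=0$, since its first component is redundant.
\item So $C_y$ is spanned by the $(12,12)$ and $(22,22)$ components, and these are exactly the ones killed by $\zeta=e_1$ and $\zeta=e_2$.
\end{enumerate}

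\textbf{Smaller corrections.} In (b)$\Rightarrow$(a) the correct contraction is
\[
\inner{Ku,(\xi\otimes\zeta)^{\otimes 2}}=\tfrac12\inner{\xi,\zeta}\bigl(\partial_\xi\inner{u,\zeta}+\partial_\zeta\inner{u,\xi}\bigr).
\]
This vanishes identically for $\zeta\perp\xi$ and equals $\lambda^2\partial_\xi\inner{u,\xi}$ for $\zeta=\lambda\xi$. Your version with $\inner{\zeta,\zeta}$ is wrong, although the conclusion survives.

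Passing through $X^2g=0$ needs line-integrability of $g$, which Theorem~\ref{th:decomposition_elastic} does not provide. It is cleaner to note that $\inner{\widehat H_y\widehat v+\widehat K_y\widehat u,(\xi\otimes\zeta)^{\otimes 2}}=0$ whenever $y\perp\xi$. The slice constraints on $\widehat f(y)$ then transfer directly to $\widehat g(y)$. Since $\widehat g$ is locally integrable and vanishes off the origin, $\widehat g=0$ with no decay assumption on $g$.

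Finally, the decay of $v$ itself and of all higher derivatives, as required in (b), is not part of the statement of Theorem~\ref{th:decomposition_elastic}. It follows from $y^\alpha\widehat v,\,y^\alpha\widehat u\in L^1$ together with the Riemann--Lebesgue lemma.
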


\begin{proof}
Assume first that~\ref{item:elastic-potential-form} is true i.e. there are $v \in C^\infty(\mathbb{R}^2;E^1(2))$ and $u \in C^\infty(\mathbb{R}^2;\mathbb{R}^2)$ so that $u$ and $v$ and all their partial derivatives tend to $0$ as $\abs{x} \to 0$, so that $f = Hv + Ku$. For any $(\xi,\zeta) \in \mathcal{Z}$ and $x \in \mathbb{R}^2$ we have 
\begin{equation}
\begin{split}
X^2(Hv)(x,\xi,\zeta)
&=
\int_{\mathbb{R}}
\sum_{i,j,k,l}
\frac{\partial^2v_{kl}}{\partial x_i\partial x_j}
(x + t\xi)
\xi_i\zeta_j\xi_k\zeta_l
\,\d t=
\int_{\mathbb{R}}
\sum_{j,k,l}
\partial_t
\left(
\frac{\partial v_{kl}}{\partial x_j}
(x + t\xi)
\zeta_j\xi_k\zeta_l
\right)
\,\d t=
0,
\end{split}
\end{equation}
since all partial derivatives of $v$ tend to $0$ as $\abs{x} \to \infty$. On the other hand, for any $(\xi,\zeta) \in \mathcal{Z}$ and $x \in \mathbb{R}^2$ we have
\begin{equation}
X^2(Ku)(x,\xi,\zeta)
=
\inner{\xi,\zeta}
\frac{1}{2}
\int_{\mathbb{R}}
\sum_{i,j}
\left(
\frac{\partial u_j}{\partial x_i}(x + t\xi)
+
\frac{\partial u_i}{\partial x_j}(x + t\xi)
\right)
\xi_i\zeta_j
\,\d t
\end{equation}
If $\xi \perp \zeta$, then $X^2(Ku)(x,\xi,\zeta) = 0$. Otherwise $\zeta = \lambda \xi$ for some $\lambda \in \mathbb{R}$ in which case
\begin{equation}
\inner{\xi,\zeta}
\frac{1}{2}
\int_{\mathbb{R}}
\sum_{i,j}
\left(
\frac{\partial u_j}{\partial x_i}(x + t\xi)
+
\frac{\partial u_i}{\partial x_j}(x + t\xi)
\right)
\xi_i\zeta_j
\,\d t
=
C
\int_{\mathbb{R}}
\sum_i
\partial_t( u_i (x + t) \xi_i) \,\d t
=
0
\end{equation}
since $u$ tends to $0$ as $\abs{x} \to 0$. Together the fact that $X^2(Hv) = 0$ and $X^2(Ku) = 0$ show that $X^2f = 0$.

Conversely, let us assume that~\ref{item:vanishing-elastic-ray-transform} is true i.e. $X^2f = 0$. Then the Fourier slice theorem implies that $f$ has mean zero i.e. $\int_{\mathbb{R}^2}f(x)\,\d x = 0$ (see~\cite[Lemma 7]{ilmavirta2025elasticraytransform}). It follows from Theorem~\ref{th:decomposition_elastic} that there are $u \in C^\infty(\mathbb{R}^2;\mathbb{R}^2)$ and $v \in C^\infty(\mathbb{R}^2;E^1(2))$ and $g \in C^\infty(\mathbb{R}^2;E^2(2))$ so that $f = Hv + Ku + g$ with $H^*g = K^*g = 0$ and $\abs{\partial_kv_{ij}(x)} \leq C(1 + \abs{x})^{-1}$ and $\abs{u_i(x)} \leq C(1 + \abs{x})^{-1}$. By our computation earlier in this proof, we see that
\begin{equation}
\label{eqn:constraint-on-g}
0
=
X^2f
=
X^2g.
\end{equation}
It was shown in~\cite{ilmavirta2025elasticraytransform} that~\eqref{eqn:constraint-on-g} together with $H^*g = K^*g = 0$ force $g$ to vanish identically. Hence $f = Hv + Ku$ and the proof is complete.
\end{proof}

\begin{remark}
The solenoidal injectivity result of Theorem~\ref{thm:elastic-injectivity} can be extended to $L^2$ integrable elastic tensor field in $2$ dimensions using the techniques introduced in~\cite{ilmavirta2025elasticraytransform}. An $L^2$ result is given there for dimensions $n \geq 3$ and the only missing pieces to prove the result in $2$ dimensions are the decomposition results in Lemma~\ref{lem:E2-decomposition} and Theorem~\ref{th:decomposition_elastic}. We decided not to rerecord these results or repeat their proofs in $2$ dimensions, since after showing the decompositions of elastic tensor fields the proof can be completed exactly as in~\cite{ilmavirta2025elasticraytransform}.
\end{remark}

\section{A connection between the momentum and elastic ray transforms}
\label{sec:relation between integral transform}

In this section we show that the kernel of the integral moment transform $\{I^0,I^1\}$ is the same as the kernel of the elastic ray transform $X^2$. Recall, that in coordinates, the elastic ray transform of elastic $1$-tensor field $f \in \sch(\mathbb{R}^n;E^1(n))$ is
\begin{align}
X^1 f(x, \xi, \zeta)
=  \sum_{i,j=1}^2 \int_{\mathbb{R}} f_{ij}(x + t \xi)\xi_i \zeta_j\quad\text{for}\quad\zeta \in \Rb \xi \cup \xi^{\perp}.
\end{align}
An elastic $1$-tensor field is simply a symmetric $2$-tensor field which allows us to package the information of $X^1f$ as $\{I^0f,Mf\}$ where $M$ is the mixed ray transform defined by
\begin{align}
    Mf(x, \xi, \zeta)=  \sum_{i,j=1}^2\int_{\mathbb{R}} f_{ij}(x + t \xi)\xi_i \zeta_j \quad\text{for}\quad \zeta\perp \xi.
\end{align}
We can write any $\zeta \perp \xi$ as
\begin{equation}
\zeta(\eta)
=
P_\xi\eta
=
\eta
-
\inner{\xi,\eta}\xi \qum{for $\eta\in\Rt\setminus\{0\}$.}
\end{equation}
Then $Mf$ can be written as
\begin{align}
Mf(x,\xi,\zeta(\eta))
= \sum_{i,j=1}^2 \int_{\mathbb{R}} f_{ij}(x + t \xi)\xi_i \zeta_j(\eta).
\end{align}
With this notation at hand, we have the following result. We freely identify elastic $1$-tensor fields with symmetric $2$-tensor fields in the statement.

\begin{proposition}
\label{prop_equi}
As subsets of $\sch(\mathbb{R}^n;S^2)$ the kernels of the transforms $\{I^0,I^1\}$ and $X^1$ are equal.
\end{proposition}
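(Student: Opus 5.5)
The plan is a direct comparison of the data through one differential identity. I will not use the decompositions of Section~\ref{sec:Decomposition Results}. Since $X^1 f$ is exactly the pair $\{I^0 f, Mf\}$, the statement amounts to the following: for $f$ with $I^0 f=0$, we have $I^1 f=0$ if and only if $Mf=0$. I will work with the extended transforms $J^0,J^1$ on $\Rb^n\times(\Rb^n\setminus\{0\})$ from~\eqref{eq:definition of Jk}. By the explicit relations quoted from~\cite{KMSS}, $I^0f=0$ is equivalent to $J^0f\equiv0$. Likewise $\{I^0f=0,\ I^1f=0\}$ is equivalent to $J^0f\equiv J^1f\equiv 0$. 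These relations are stated for $n=2$, but the same formulas hold verbatim in $\Rb^n$.

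\textbf{Key identity.} For $\eta\in\Rb^n$, differentiate
\[
J^0f(x,\xi)=\sum_{i,j}\int_{\Rb} f_{ij}(x+t\xi)\,\xi_i\xi_j\,dt
\]
in $\xi$ in the direction $\eta$. Differentiation under the integral is justified because $f$ is Schwartz. The derivative hitting $f$ gives $\int t\,\langle \eta,\nabla\rangle f_{ij}(x+t\xi)\xi_i\xi_j\,dt$, which equals $\langle\eta,\nabla_x\rangle J^1f(x,\xi)$. The derivative hitting $\xi_i\xi_j$ gives, by symmetry of $f$, twice $\sum_{i,j}\int f_{ij}(x+t\xi)\xi_i\eta_j\,dt$. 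Therefore
\begin{equation}
2\sum_{i,j=1}^n\int_{\Rb} f_{ij}(x+t\xi)\,\xi_i\eta_j\,dt=\langle \eta,\partial_\xi\rangle J^0f(x,\xi)-\langle\eta,\nabla_x\rangle J^1f(x,\xi).
\end{equation}
The left-hand side at $|\xi|=1$ and $\eta=\zeta\perp\xi$ is exactly $2Mf(x,\xi,\zeta)$. For $\eta=\xi$ it is $2I^0f$.

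\textbf{Forward inclusion.} If $I^0f=I^1f=0$, then $J^0f\equiv J^1f\equiv0$, so the right-hand side of the identity vanishes for every $\eta$. Taking $\eta=\zeta\perp\xi$ gives $Mf=0$. Hence $X^1f=0$.

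\textbf{Reverse inclusion.} Suppose $X^1f=0$, so $I^0f=0$ and $Mf=0$. Then $J^0f\equiv0$ on the whole extended domain, and in particular $\partial_\xi J^0f=0$. Any $\eta$ splits as $\langle\xi,\eta\rangle\xi+P_\xi\eta$. The $\xi$-component of the left-hand side is a multiple of $I^0f$, and the perpendicular component is $Mf$, so the left-hand side vanishes for all $\eta$. Hence $\nabla_xJ^1f(\cdot,\xi)\equiv0$ for each unit $\xi$, and $J^1f(\cdot,\xi)$ is constant in $x$. To see the constant is $0$, send $x=s\omega$ with $\omega\perp\xi$ and $s\to\infty$. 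Since $f$ is Schwartz, $\int t f_{ij}(s\omega+t\xi)\xi_i\xi_j\,dt\to0$ because the line stays at distance $s$ from the origin. Thus $J^1f\equiv0$, so $I^1f=0$.

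\textbf{Main obstacle.} The only delicate points are bookkeeping ones. The first is to make sure the passage between $(I^0,I^1)$ on $T\Sb^{n-1}$ and $(J^0,J^1)$ on the extended domain is legitimate in general $n$, using the quoted relations of~\cite{KMSS}. The second is the justification of differentiating under the integral sign. The decay argument for the constant $J^1f$ is routine.
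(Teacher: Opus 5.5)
Your proposal is correct and follows essentially the paper's argument. The forward inclusion uses the same identity $\partial_{\xi_j}J^0f-\partial_{x_j}J^1f=2\sum_i\int_{\mathbb{R}} f_{ij}(x+t\xi)\,\xi_i\,dt$. For the converse, both you and the paper show $\nabla_xJ^1f(\cdot,\xi)=0$ and then remove the constant by letting $x\to\infty$ along $\xi^\perp$.

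The one difference is cosmetic: you split $\eta$ directly into its $\xi$- and $\xi^\perp$-components, while the paper differentiates $Mf(x,\xi,\zeta(\eta))$ in $\eta$. Your version is cleaner and avoids the paper's harmless sign slip in $\partial_{\eta_k}\zeta_j=\delta_{kj}-\xi_k\xi_j$.
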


\begin{proof}
Recall the operators $J^0$ and $J^1$ defined~\eqref{eq:definition of Jk}, which are extensions of $I^0$ and $I^1$, respectively, to $\Rb^n \times \Rb^n \setminus \{0\}$. A straightforward computation shows that
\begin{align}\label{eq_16}
	\p_{\xi_j} J^0f(x, \xi) - \p_{x_j}J^1f (x, \xi) = 2 J^0f_j(x, \xi) =  2\sum_{i=1}^2\int_{\Rb} f_{ij}(x+t\xi)\, \xi_i d t. 
\end{align}
Contracting the above relation by $\zeta \in \xi^{\perp}$ we get
\begin{align}
 \sum_{j=1}^2\zeta_j\lr{\p_{\xi_j} J^0f(x, \xi)  - \p_{x_j}J^1f(x, \xi)} =  2\, Mf(x, \xi, \zeta), 
\end{align}
where $Mf (x, \xi, \zeta)$ is the mixed ray transform of $f$. Therefore, if $\{I^0f,I^1f\} = 0$, then also $X^1f = \{I^0f,Mf\} = 0$.
\vspace{2mm}\\
Conversely, assume that $X^1f = \{I^0f,Mf\} = 0$. We compute that
\begin{align}
\partial_{\eta_k} Mf (x, \xi, \zeta(\eta))
&= \sum_{i,j=1}^2 \int_{\Rb} f_{ij}(x+t\xi)\xi_i \partial_{\eta_k}\zeta_j(\eta)=  \sum_{i,j=1}^2 \int_\mathbb{R}
f_{ij}(x+t\xi)\xi_i(\delta_{kj} + \xi_k\xi_j)
\\ &=  \sum_{i= 1}^2 \int_{\Rb} f_{ik} (x+t\xi)\xi_i +
\xi_k  \sum_{i,j=1}^2 \int_{\Rb} f_{ij} (x+t\xi)\xi_i \xi_j .
\end{align}
Using~\eqref{eq_16} we obtain
\begin{equation}
\p_{\eta_k}Mf (x, \xi,\zeta(\eta))
=
\frac{1}{2}
\left(
\p_{\xi_k} J^0f (x, \xi)
-
\p_{x_k}J^1f (x, \xi)
\right)
+
\xi_kJ^0f(x,\xi).
\end{equation}
Since $\{I^0f,Mf\} = 0$, the above equation implies that $\partial_{x_k}J^1f(x,\xi) = 0$ for all $k$ and $(x,\xi)$. That is, $J^1f(x,\xi)$ is constant for all $(x,\xi)$. Let this constant be $C \in \mathbb{R}$. Then using any $\xi' \in \xi^\perp$ and $J^1f|_{T\mathbb{S}^1} = I^1f \in \sch(T\mathbb{S}^1)$ we find that
\begin{equation}
C
=
\lim_{s \to \infty}
J^1f(x+s\xi',\xi)
=
0
\end{equation}
Thus we have shown $\{I^0f,I^1f\} = 0$ proving the equality of the kernels of the two transforms.
\end{proof}
% \begin{definition}\label{def:deneralized Snt _Venant_ope}
% The generalized Saint Venant operator $ \Rc^1:C^\infty(\Rb^n;S^m)\rightarrow C^\infty(S^{m-k} \otimes S^m) $ is defined by the equality 
% \begin{align}\label{eq:Genralized Saint-Venant operator}
% (\Rc^1 f)_{ijk}= \sigma(j,k)\left\{ \frac{\partial f_{ij}}{\partial x_k}  - \frac{\partial f_{ik}}{\partial x_j}  \right\}
% \end{align}
% \end{definition}
\noindent As a corollary, we obtain the following kernel characterization for the elastic ray transform of elastic $1$-tensor fields.
\begin{corollary}
Let $f\in \sch(\mathbb{R}^n;E^1(n))$. Then the following conditions are equivalent.
\begin{enumerate}
\item The elastic ray transform $X^1f$ of $f$ vanishes.

\item There are $\phi \in C^\infty(\mathbb{R}^n)$ such that $f= \d^2 \phi$,  with $\phi$ and $\partial_{x_i}\phi$ tend to zero as $\abs{x} \to \infty$ for all $i$.

\item The tensor field $f$ is in the kernel of the generalized Saint-Venant operator $\mathcal{R}^1$ defined by
\begin{equation}
(\mathcal{R}^1f)_{ijk}(x)
=
\frac{1}{2}
\left(
\frac{\partial f_{ij}}{\partial x_k}(x)
-
\frac{\partial f_{ik}}{\partial x_j}(x)
\right).
\end{equation}
\end{enumerate}
\end{corollary}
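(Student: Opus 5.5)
The three conditions will be linked through results already available: (a)$\Leftrightarrow$(b) via Proposition~\ref{prop_equi} and Theorem~\ref{th:injectivity result}, and (b)$\Leftrightarrow$(c) via a direct computation for (b)$\Rightarrow$(c) plus a Poincaré-type integration for (c)$\Rightarrow$(b).

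\emph{Step 1: (a)$\Leftrightarrow$(b).} By Proposition~\ref{prop_equi}, $X^1f=0$ holds if and only if $I^0f=I^1f=0$. By Theorem~\ref{th:injectivity result}, this holds if and only if $f=\D^2\phi$ with $\phi,\partial_{x_i}\phi\to 0$ at infinity. This closes the first equivalence. Theorem~\ref{th:injectivity result} is stated in $\Rb^2$; for general $n$ one would instead cite the higher-dimensional version in~\cite{Rohit_Suman}. Since the paper's setting is $n=2$, I will work with $n=2$.

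\emph{Step 2: (b)$\Rightarrow$(c).} If $f_{ij}=\partial_i\partial_j\phi$, then $\partial_k f_{ij}-\partial_j f_{ik}=\partial_k\partial_i\partial_j\phi-\partial_j\partial_i\partial_k\phi=0$, so $\mathcal{R}^1f=0$.

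\emph{Step 3: (c)$\Rightarrow$(b).} This is the main step. Suppose $\partial_k f_{ij}=\partial_j f_{ik}$ for all $i,j,k$.
\begin{itemize}
\item For each fixed $i$, the row $f_{i\cdot}$ is a curl-free Schwartz covector field on the simply connected space $\Rb^n$. Set $w_i(x)=\int_0^1 \sum_j f_{ij}(sx)x_j\,ds$, the Poincaré lemma potential, so that $\partial_j w_i=f_{ij}$.
\item By the symmetry of $f$, $\partial_j w_i=f_{ij}=f_{ji}=\partial_i w_j$. Hence $w$ is itself curl-free, and $w=\nabla\phi_0$ for a smooth $\phi_0$. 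This gives $f=\D^2\phi_0$.
\end{itemize}

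\emph{Step 4: normalising the potential.} The obstacle is to choose $\phi$ so that $\phi$ and $\nabla\phi$ decay, since $\phi_0$ is determined only up to an affine function. Because $\nabla\phi_0=w$ is determined only up to a constant, I would avoid normalising directly. Instead I would show that $f=\D^2\phi_0$ forces $I^0f=I^1f=0$ for lines, by integrating by parts along each line; the boundary terms involve $\nabla\phi_0$ and $\phi_0$ at $\pm\infty$. The cleaner route is via Fourier analysis:
\begin{itemize}
\item Write $\widehat f_{ij}(y)=-y_iy_j\widehat{\phi_0}$ away from the origin. Then $\widehat f$ has the form $i_y^2$ of something, so the $2$-solenoidal part $\widehat g$ of Lemma~\ref{th: decomposition of f in frequency variable} vanishes on $\Rb^2\setminus\{0\}$. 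In particular $\sum_{ij}\widehat f_{ij}(y)\xi_i\xi_j=0$ for $y\perp\xi$, and the same holds for its $\xi$-derivative along $\partial_y$.
\item These identities give $\widehat{I^0f}=\widehat{I^1f}=0$.
\item Theorem~\ref{th:injectivity result} then supplies a decaying $\phi$ with $f=\D^2\phi$.
\end{itemize}
The key point to verify is that $\mathcal{R}^1f=0$ translates on the Fourier side into $y_k\widehat f_{ij}=y_j\widehat f_{ik}$, i.e. $\widehat f(y)\in\operatorname{span}(y\otimes y)$ for $y\neq0$. Using continuity of $\widehat f$ at $0$, this also gives $\widehat f(0)=0$.
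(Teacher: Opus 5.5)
Your proposal is correct. For (a)$\Leftrightarrow$(b) it coincides with the paper: Proposition~\ref{prop_equi} turns $X^1f=0$ into $I^0f=I^1f=0$, and Theorem~\ref{th:injectivity result} identifies that kernel, with~\cite{Rohit_Suman} needed for $n\ge 3$. The difference is condition (c). The paper's proof never addresses $\mathcal{R}^1$ for $n=2$, and for $n\ge 3$ it delegates it to~\cite[Theorem 3.1]{MS2023}; you instead give a self-contained argument.

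Your key observation is right. $\mathcal{R}^1f=0$ is equivalent to $y_k\widehat f_{ij}=y_j\widehat f_{ik}$, i.e.\ $\widehat f(y)=c(y)\,y\otimes y$ on $\Rb^2\setminus\{0\}$ with $c$ smooth there. Hence $\sum_{i,j}\widehat f_{ij}(y)\xi_i\xi_j=c(y)\langle y,\xi\rangle^2$ and its $\langle\xi,\partial_y\rangle$-derivative vanish on $\xi^\perp\setminus\{0\}$, and at $y=0$ by continuity, so $I^0f=I^1f=0$. This makes your Poincar\'e-lemma step and the first bullet of Step 4 superfluous. That bullet also silently needs $\phi_0\in\mathcal{S}'$, which holds because $\nabla\phi_0=w$ is bounded.

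Two shortcuts are worth knowing. First, since you also get $\widehat f(0)=0$, you could apply Theorem~\ref{th:new_decomposition} directly: its solenoidal part is the locally integrable function $\widehat g$, which vanishes off the origin, so $g=0$. Second, the normalisation you chose to avoid is elementary. Because $\nabla w=f$ is Schwartz and the circles $\{|x|=R\}$ are connected, $w$ tends to a single constant vector $W$ with rapidly decaying error. Likewise $\phi_0-\langle W,x\rangle$ tends to a constant $C$, so $\phi=\phi_0-\langle W,x\rangle-C$ satisfies (b). This route proves (b)$\Leftrightarrow$(c) by calculus alone in every dimension $n\ge 2$, independently of any injectivity theorem. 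For $n\ge3$ the only external input is then the kernel description of $\{I^0,I^1\}$, not a separate Saint-Venant result.
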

\begin{proof}
By Proposition~\ref{prop_equi}, we have that $X^1f=0$ if and only if $I^0f=I^1f=0$. Hence, the result follows from Theorem~\ref{th:injectivity result} for $n = 2$ and from~\cite[Theorem 3.1]{MS2023} and~\cite[Theorem 4.3]{Rohit_Suman} for $n \geq 3$.
\end{proof}
\appendix
\section{Homogeneous distributions}\label{appen_homo_dis}
%\ssc{I will modify this little bit.}
We start with recalling some of the results from~\cite{Dorina,Hormander_I,Taylor_book}.
\begin{definition}\label{def_distribution}
Let $\Omega$ be an open subset of $\mathbb{R}^n$. A linear functional 
$u : C_c^\infty(\Omega) \to \mathbb{R}$ 
is called a \emph{distribution} if for every compact set 
$K \subset \Omega$, there exist a constant $C \ge 0$ 
and a non-negative integer $N$ such that
\begin{align}\label{eq_1.1}
	|(u, \phi)| \le C \sum_{|\alpha|\le N} \sup_{K} | \partial^{\alpha} \phi | 
	\quad \text{for all } \phi \in C_c^\infty(\Omega) \text{ with } \mathrm{supp}\, \phi \subset K.
\end{align}
The inequalities in \eqref{eq_1.1} are called \emph{seminorm estimates}.
\end{definition}
\noindent Let $f \in L^1_{\mathrm{loc}}(\Omega)$. The distribution associated with $f$ is denoted by $T_f$ and defined as 
\begin{equation}\label{def_T_f}
	T_f(\phi) = \int_{\Omega} f(x)\, \phi(x) \, \mathrm{d}x 
	= \int_{\Omega} f \phi.
\end{equation}
It is easy to see that 
\begin{equation}
	|T_f(\phi)| \le C \sup_{K} |\phi| 
	\quad \text{for all } \phi \text{ with } \mathrm{supp}\, \phi \subset K,
\end{equation}
which shows that $T_f$ is indeed a distribution.

Let $f(x) = |x|^{-a}$, $x \in \mathbb{R}^n$, $a \in \mathbb{R}$. 
Then $f \in L^1_{\mathrm{loc}}(\mathbb{R}^n)$ if and only if $a < n$. 
Therefore, $|x|^{-a}$ defines a distribution whenever $a < n$. 
The natural question is: what happens when $a = n$?

We start with the simplest case $n = a = 1$. 
Recall the Cauchy principal value distribution $\mathrm{p.v.}\,\frac{1}{x}$. 
It acts on a test function $\phi(x) \in C_c^\infty(\mathbb{R})$ by
\[
\langle \mathrm{p.v.}\, \tfrac{1}{x}, \phi \rangle 
= \lim_{\epsilon \to 0^+} \int_{|x| > \epsilon} \frac{\phi(x)}{x} \, \mathrm{d}x.
\]
Using the fact that $\frac{1}{x}$ is an odd function, we can rewrite this as
\begin{align*}
    \langle \mathrm{p.v.}\, \tfrac{1}{x}, \phi \rangle 
    &= \lim_{\epsilon \to 0^+} \int_{|x| > \epsilon} \frac{\phi(x) - \phi(0)}{x} \, \mathrm{d}x= \int_0^{\infty} \frac{\phi(x) - \phi(-x)}{x}\, \mathrm{d}x.
\end{align*}
One can define $\mathrm{p.v.}\,\frac{1}{x^2}$ similarly.

Next, we extend this idea to higher dimensions. To this end, recall that 
$\mathcal{S}(\mathbb{R}^n)$ denotes the Schwartz space of rapidly decreasing functions. 
For $f \in \mathcal{S}(\mathbb{R}^n)$, we have
\begin{align}
   \sup_{x \in \mathbb{R}^n} |x^{\alpha} \partial^{\beta} f(x)| < \infty 
   \quad \text{for all multi-indices } \alpha, \beta.
\end{align}
We define $\mathcal{S}'(\mathbb{R}^n)$ as the space of bounded linear functionals 
from $\mathcal{S}(\mathbb{R}^n)$ to $\mathbb{R}$. 
An element $u \in \mathcal{S}'(\mathbb{R}^n)$ is called a 
\emph{tempered distribution} if there exist non-negative integers $m,k$ 
and a constant $C > 0$ such that 
\begin{align}
|(u, \phi)| \le C \sum_{\substack{x \in \mathbb{R}^n \\ |\alpha|\le m,\, |\beta|\le k}} 
|x^{\alpha} \partial^{\beta} \phi(x)|, 
\quad \forall \phi \in \mathcal{S}(\mathbb{R}^n).
\end{align}
The advantage of working with $\mathcal{S}(\mathbb{R}^n)$ and 
$\mathcal{S}'(\mathbb{R}^n)$ is the following:
\begin{itemize}
    \item[1.] The Fourier transform $\mathcal{F}: \mathcal{S}(\mathbb{R}^n) \to \mathcal{S}(\mathbb{R}^n)$ 
    is bijective and continuous, and its inverse is also continuous.
    \item[2.] The Fourier transform $\mathcal{F}: \mathcal{S}'(\mathbb{R}^n) \to \mathcal{S}'(\mathbb{R}^n)$ 
    is bijective and continuous, and its inverse is also continuous.
\end{itemize}

For $u \in \mathcal{S}'(\mathbb{R}^n)$, its Fourier transform is defined by
\begin{align}
    (\mathcal{F}u, \phi) = (u, \mathcal{F}\phi) \quad \forall \phi \in \mathcal{S}(\mathbb{R}^n).
\end{align}
Examples of tempered distributions include $|x|^{-a}$ for $a < n$.  
We now define the tempered distribution associated with $|x|^{-n}$ in $\mathbb{R}^n$ 
and compute its Fourier transform.
\vspace{2mm}\\
\noindent Recall the Cauchy principal value distribution 
$\mathrm{p.v.}\,f(x) \in \mathcal{S}'(\mathbb{R})$, where $f(x) = \frac{1}{x}$. 
Note that 
\begin{itemize}
    \item $f \in C^{\infty}(\mathbb{R} \setminus \{0\})$,
    \item $f$ is positively homogeneous of degree $-1$,
    \item $f$ is odd, i.e. $f(1) + f(-1) = 0$.
\end{itemize}
Extending these properties to $\mathbb{R}^n$, we now define the 
Cauchy principal value distribution in $\mathbb{R}^n$. 
\vspace{2mm}\\
Let $f\in C^{\infty}(\Rn\setminus\{0\})$, homogeneous of degree $-n$ and $\int_{\sn} f\, \d \omega=0$. Then
\begin{equation}\label{def_pv}
(\mathrm{p.v.}(f),\phi) = \int_{\Rn} f(x)( \phi(x)- \phi(0) \psi(x)) \d x \quad \forall \phi\in \sch(\mathbb{R}^n),
\end{equation}
where
% \begin{align}\label{def_of_psi}
%     \mbox{$\psi$ is a radial  $C^{\infty}$ function with $\psi(0)=1$ and $\abs{\psi}$ behaves like $\abs{x}^{-\delta}$ for $\abs{x}$ large and for some  $\delta\in (0, \infty)$.}
% \end{align}
\begin{itemize}\label{def_of_psi}
    \item $\psi$ is a radial  $C^{\infty}$ function with $\psi(0)=1$,
    \item $\abs{\psi}$ behaves like $\abs{x}^{-\delta}$ for $\abs{x}$ large and for some  $\delta\in (0, \infty)$.
\end{itemize} 
The above definition is independent of 
 the choice of $\psi$ due to $\int_{\sn} f\, \d \omega=0$. This defines a tempered distribution in $\Rn$. However, when  $f(x) = \abs{x}^{-n}$ the  condition $\int_{\sn} f\, \d \omega=0$ fails. However, one could still work with the above definition due to following reason:
\begin{align*}
 ( \phi(x)- \phi(0) \psi(x))= x\Tilde{\phi} \quad \mbox{for some $\Tilde{\phi}\in \sch(\Rn)$},   
\end{align*}
since this entails $f(x)( \phi(x)- \phi(0) \psi(x))\in L^1(\Rn)$. But this distribution is going to  depends on the choice of  $\psi$.
To this end, we  denote \[T_{\psi}f =\mathrm{p.v.}\frac{1}{\abs{x}^n} \quad \mbox{and}\quad T_{\psi}f\in \srn.\]
Its action on $\sch(\Rn)$ is given by 
\begin{align}
    \left(\mathrm{p.v.}\frac{1}{\abs{x}^n}, \phi\right) = \int_{\Rn} \frac{\phi(x) -\psi(0) \psi(x)}{\abs{x}^n} \quad \forall \phi \in \sch(\Rn),
\end{align}
where $\psi$  is a radial  $C^{\infty}$ function with $\psi(0)=1$ and $\abs{\psi}$ behaves like $\abs{x}^{-\delta}$ for $\abs{x}$ large and for some  $\delta\in (0, \infty)$. One could take $ \psi(x) = e^{-\abs{x}^2/2}.$

% Next we denote the dilation operator as $\tau_{\lambda} F(x) =F(\lambda x)$, for $\lambda\in \R$.  Let $u\in \srn$ then we have 
% \begin{align*}
%     (\tau_{\lambda} u, \phi) = \lambda^{-n}(u,\tau_{\lambda^{-1}} \phi) \quad \forall \phi\in \Sch.
%     \end{align*}
%     Moreover $\tau_{\lambda} u\in \srn$.

\section{Acknowledgments}\label{sec:acknowledgement}

A.K. acknowledges the support of the Geo-Mathematical Imaging Group at Rice University. SKS is supported by IIT Bombay
seed grant (RD/0524-IRCCSH0-021) and ANRF Early Career Research Grant (ECRG) (RD/0125-ANRF000-016). RKM is supported by ANRF-Matrics grant ANRF/ARGM/2025/000468/MTR.

\bibliographystyle{plain}
\bibliography{ref}	
\end{document}